\documentclass[10pt]{amsart}
\usepackage{latexsym,amsfonts,amssymb}

\setcounter{page}{1} \setlength{\textwidth}{14.6cm}
\setlength{\textheight}{22.5cm} \setlength{\evensidemargin}{0.8cm}
\setlength{\oddsidemargin}{0.8cm} \setlength{\topmargin}{0.8cm}

\newtheorem{theorem}{Theorem}[section]
\newtheorem{lemma}[theorem]{Lemma}
\newtheorem{corollary}[theorem]{Corollary}
\newtheorem{question}[theorem]{Question}
\theoremstyle{definition}

\theoremstyle{remark}

\begin{document}

\title{Local properties on the remainders of the topological groups}

\author{Fucai Lin}%
\address{Fucai Lin: Department of Mathematics and Information Science,
Zhangzhou Normal University, Zhangzhou 363000, P. R. China.}%
\email{linfucai2008@yahoo.com.cn}

\subjclass[2000]{54A25, 54B05} \keywords{Topological group;
Countably compact; $G_{\delta}$-subset; Quasi-$G_{\delta}$-diagonal;
Countable type; $Lindel\ddot{o}f$ $p$-space; Metrizability;
Compactification; BCO; D-space.}

\thanks{Supported by the NSFC (No. 10971185, No. 10971186) and the Educational Department of Fujian Province (No. JA09166) of China.}

\begin{abstract} When does a topological group $G$ have a
Hausdorff compactification $bG$ with a remainder belonging to a
given class of spaces? In this paper, we mainly improve some results
of A.V. Arhangel'ski\v{\i} and C. Liu's. Let $G$ be a non-locally
compact topological group and $bG$ be a compactification of $G$. The
following facts are established: (1) If $bG\setminus G$ has a locally a point-countable $p$-metabase and
$\pi$-character of $bG\setminus G$ is countable, then $G$ and $bG$
are separable and metrizable; (2) If $bG\setminus G$  has locally a
$\delta\theta$-base, then $G$ and $bG$ are separable and metrizable;
(3) If $bG\setminus G$ has locally a quasi-$G_{\delta}$-diagonal,
then $G$ and $bG$ are separable and metrizable. Finally, we give a
partial answer for a question, which was posed by C. Liu in
\cite{LC}.
\end{abstract}

\maketitle

\parskip 0.15cm

\section{Introduction}
By a remainder of a space $X$ we understand the subspace
$bX\setminus X$ of a Hausdorff compactification $bX$ of $X$. In
\cite{A, A1, A2, HM, LC}, many topologists studied the following
question of a Hausdorff compactification: When does a Tychonoff
space $X$ have a Hausdorff compactification $bX$ with a remainder
belonging to a given class of spaces? A famous classical result in
this study is the following theorem of M. Henriksen and J.
Isbell \cite{HM}:

\medskip
{\bf (M. Henriksen and J. Isbell)} A space $X$ is of countable type
if and only if the remiander in any (in some) compactification of
$X$ is Lindel\"{o}f.

Recall that a space $X$ is of {\it countable type} \cite{E} if every
compact subspace $F$ of $X$ is contained in a compact subspace
$K\subset X$ with a countable base of open neighborhoods in $X$.
Suppose that $X$ is a non-locally compact topological group, and
that $bX$ is a compactification of $X$. In \cite{A1}, A.V.
Arhangel'ski\v{\i} showed that if the remainder $Y=bX\setminus X$
has a $G_{\delta}$-diagonal or a point-countable base, then both $X$
and $Y$ are separable and metrizable. In \cite{LC}, C. Liu improved
the results of A.V. Arhangel'ski\v{\i}, and proved that if $Y$
satisfies one of the following conditions (i) and (ii), then $X$ and
$bX$ are separable and metrizable.

(i) $Y=bX\setminus X$ is a quotient $s$-image of a metrizable space,
and $\pi$-character of $Y$ is countable;

(ii) $Y=bX\setminus X$ has locally a $G_{\delta}$-diagonal.

In this paper, we mainly concerned with the following statement, and
under what condition $\Phi$ it is true.

\medskip
{\bf Statement} Suppose that $G$ is a non-locally compact
topological group, and that $Y=bG\setminus G$ has locally a
property-$\Phi$. Then $G$ and $bG$ are separable and metrizable.

\medskip
Recall that a space $X$ has {\it locally a property-$\Phi$} if for
each point $x\in X$ there exists an open set $U$ with $x\in U$ such
that $U$ has a property-$\Phi$.

\medskip
In Section 2 we mainly study some local properties on the remainders
of the topological group $G$ such that $G$ and $bG$ are separable
and metrizable if the $\pi$-character of $bG\setminus G$ is
countable. Therefore, we extend some results of A.V.
Arhangel'ski\v{\i} and C. Liu.

In Section 3 we prove that if the remainders of a topological group
$G$ has locally a quasi-$G_{\delta}$-diagonal, then $G$ and $bG$ are
separable and metrizable. Therefore, we improve a result of C. Liu
in \cite{LC}. Also, we study the remainders that are the unions of
$G_{\delta}$-diagonals.

In Section 4 we mainly give a partial answer for a question, which
was posed by C. Liu in \cite{LC}. Finally, we also study the
remainders that are locally hereditarily $D$-spaces.

Recall that a family $\mathcal{U}$ of non-empty open sets of a space
$X$ is called a {\it $\pi$-base} if for each non-empty open set $V$
of $X$, there exists an $U\in\mathcal{U}$ such that $U\subset V$. The
{\it $\pi$-character} of $x$ in $X$ is defined by $\pi\chi(x,
X)=\mbox{min}\{|\mathcal{U}|:\mathcal{U}\ \mbox{is a local}\
\pi\mbox{-base at}\ x\ \mbox{in}\ X\}$. The {\it $\pi$-character of
$X$} is defined by $\pi\chi(X)=\mbox{sup}\{\pi\chi(x, X):x\in
X\}$.

The $p$-spaces are a class of generalized metric spaces \cite{A3}. It is
well-known that every metrizable space is a $p$-space, and every
$p$-space is of countable type.

Throughout this paper, all spaces are assumed to be Hausdorff.
The positively natural numbers is denoted by $\mathbb{N}$. We refer the
readers to \cite{E, Gr} for notations and terminology not explicitly
given here.

\section{Remainders with the countable $\pi$-characters}
Let $\mathcal{A}$ be a collection of subsets of $X$. $\mathcal{A}$
is a {\it $p$-network} \cite{BD} for $X$ if for distinct points $x,
y\in X$, there exists an $A\in \mathcal{A}$ such that $x\in A\subset
X-\{y\}$. The collection $\mathcal{A}$ is called a {\it $p$-base} (i.e., {\it
$T_{1}$-point-separating open cover}) \cite{BD} for $X$ if
$\mathcal{A}$ is a $p$-network and each element of $\mathcal{A}$ is
an open subset of $X$. The collection $\mathcal{A}$ is a {\it
$p$-metabase} \cite{Ls} (in \cite{BD}, $p$-metabase is denoted by the
condition (1.5)) for $X$ if for distinct points $x, y\in X$, there
exists an $\mathcal{F}\in \mathcal{A}^{<\omega}$ such that $x\in
(\cup\mathcal{F})^{\circ}\subset\cup\mathcal{F}\subset X-\{y\}$.

First, we give some technique lemmas.

\begin{lemma}\cite{A} \label{l0}
If $X$ is a Lindel$\ddot{o}$f $p$-space, then any remainder of $X$
is a Lindel$\ddot{o}$f $p$-space.
\end{lemma}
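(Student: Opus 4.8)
The plan is to realise the Lindel\"{o}f $p$-space by its standard structural description --- every Lindel\"{o}f $p$-space is the preimage of a separable metrizable space under a perfect map --- and then to read off the remainder from an explicit product embedding. So let $X$ be the given Lindel\"{o}f $p$-space and $bX$ an arbitrary compactification of it. First I would fix a perfect map $f\colon X\to M$ onto a separable metrizable space $M$ and a metrizable compactification $bM$ of $M$ (which exists, $M$ being separable metrizable).

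Next I would embed $X$ into $bX\times M$ by the diagonal map $g(x)=(x,f(x))$ and check that $X'=g(X)$ is a \emph{closed} subset of $bX\times M$: if $(p,m)$ lies in its closure with $m\in M$, then a net in $X$ witnessing this has its $f$-images converging to $m$, so by closedness of $f$ and compactness of $f^{-1}(m)$ it clusters at a point of $f^{-1}(m)$, which (as $bX$ is Hausdorff) forces $p\in X$ and $m=f(p)$. Then I would put $W=\overline{X'}$, closure taken in the compact space $bX\times bM$; this is a compactification of $X$, and since the closure of $X'$ in the subspace $bX\times M$ is just $X'$ itself, we get $W\cap(bX\times M)=X'$, hence
\[
W\setminus X'=W\cap\bigl(bX\times(bM\setminus M)\bigr).
\]
As $bM$ is compact metrizable, $bM\setminus M$ is separable metrizable, so $W\setminus X'$ is a closed subspace of a product of a compact space with a separable metrizable space, and therefore is itself a Lindel\"{o}f $p$-space.

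Finally I would push this down to $bX$ via the projection $\pi\colon bX\times bM\to bX$. Its restriction $\psi\colon W\to bX$ is a continuous surjection which is the identity on $X$ (under $X\cong X'$), hence perfect since $W$ and $bX$ are compact; and the continuity of $f$ on $X$ gives $\psi^{-1}(x)=\{(x,f(x))\}$ for every $x\in X$, i.e. $\psi^{-1}(X)=X'$. Consequently $\psi^{-1}(bX\setminus X)=W\setminus X'$, so $\psi$ restricts to a perfect map of $W\setminus X'$ onto $bX\setminus X$. Since perfect images of Lindel\"{o}f $p$-spaces are Lindel\"{o}f $p$-spaces, the remainder $bX\setminus X$ is a Lindel\"{o}f $p$-space, which is what we want.

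The delicate steps are the two graph-type computations --- that $X'$ is closed in $bX\times M$ and that $\psi^{-1}(X)=X'$ --- and both come down to the same point: $f$ is a closed map that is genuinely continuous on $X$, which is precisely what prevents extra points of the remainder from being dragged into $X$. Everything else I would quote as standard facts about $p$-spaces: that a Lindel\"{o}f $p$-space is exactly a perfect preimage of a separable metrizable space, that finite products and closed subspaces of $p$-spaces are again $p$-spaces (so closed subspaces of (compact)$\times$(separable metrizable) are Lindel\"{o}f $p$-spaces), and that perfect maps preserve the class of Lindel\"{o}f $p$-spaces.
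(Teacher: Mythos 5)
The paper does not prove this lemma at all: it is quoted verbatim from Arhangel'ski\v{\i}'s paper \cite{A}, so there is no in-text argument to compare yours against. Your proof is correct and is, in substance, the standard argument for this result: realize the Lindel\"{o}f $p$-space as a perfect preimage $f\colon X\to M$ of a separable metrizable space, extend the situation over compactifications, check that $f^{-1}(M)$ picks out exactly $X$ (your two ``graph-type'' computations, both of which are sound --- the closedness of $g(X)$ in $bX\times M$ via clustering of nets in the compact fibers, and $\psi^{-1}(X)=X'$ via continuity of $f$ and Hausdorffness of $bM$), and conclude that the remainder is a perfect preimage of a subspace of $bM\setminus M$. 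The main variation from the usual presentation is that you build the auxiliary compactification $W=\overline{g(X)}\subset bX\times bM$ and project down to the \emph{given} $bX$, rather than working in $\beta X$ and then pushing forward along the canonical map $\beta X\to bX$; this buys you a single construction handling an arbitrary compactification directly, at no extra cost. The only ingredients you quote without proof are genuinely standard: the characterization of Lindel\"{o}f $p$-spaces as perfect preimages of separable metrizable spaces, stability under closed subspaces and products with compact factors, and preservation under perfect images (Filippov's theorem plus preservation of the Lindel\"{o}f property). I see no gap.
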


\begin{lemma}\cite{LC} \label{l6}
Let $G$ be a non-locally compact topological group. Then $G$ is
locally separable and metrizable if for each point $y\in
Y=bG\setminus G$, there is an open neighborhood $U(y)$ of $y$ such
that every countably compact subset of $U(y)$ is metrizable and
$\pi$-character of $Y$ is countable .
\end{lemma}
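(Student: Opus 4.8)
The plan is to pin down the global structure of the remainder $Y=bG\setminus G$ and then feed it into the known metrizability theorems for remainders of topological groups. First I would record the standard preliminaries: a topological group is either locally compact or nowhere locally compact, so $G$ is nowhere locally compact; hence $Y$ is dense in $bG$, and $Y$ is not compact (otherwise $G$ would be open, hence locally compact, in $bG$). Both $G$ and $Y$ are dense in $bG$, so $bG$ is at once a compactification of $G$ and of $Y$, and $G$ is itself a remainder of $Y$; consequently Lemma \ref{l0} can be read in either direction, $G$ being a Lindel\"of $p$-space if and only if $Y$ is.

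Next I would show, using only the hypothesis, that \emph{every compact subspace of $Y$ is metrizable}. Let $K\subseteq Y$ be compact; the cover $\{U(y):y\in Y\}$ has a finite subfamily $U(y_1),\dots,U(y_m)$ covering $K$, and since $K$ is compact Hausdorff, hence normal, we may shrink it to a closed cover $F_1,\dots,F_m$ of $K$ with $F_i\subseteq U(y_i)$. Each $F_i$ is compact, hence countably compact, hence metrizable by hypothesis, hence second countable; since $K$ is a compact space that is a finite union of closed second-countable subspaces, $K$ has a countable network and is therefore second countable, i.e.\ metrizable.

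Now the decisive step. By Arhangel'ski\v{\i}'s dichotomy for remainders of topological groups, either $G$ is a Lindel\"of $p$-space — in which case $Y$ is a Lindel\"of $p$-space by Lemma \ref{l0} — or $Y$ is pseudocompact. In the first case I would combine the countability of $\pi\chi(Y)$ with the conclusion of the previous paragraph to show that $Y$ has a $G_\delta$-diagonal: working with a countable subcover $\{U(y_n)\}$ of the Lindel\"of space $Y$, one builds a $G_\delta$-diagonal sequence out of the countable local $\pi$-bases at the points of $Y$ together with the second-countable structure carried by the metrizable compact subspaces that sit inside the sets $U(y_n)$. Once $Y$ has a $G_\delta$-diagonal, the theorem of Arhangel'ski\v{\i} recalled in the Introduction (\cite{A1}), applied to the non-locally-compact topological group $G$ with remainder $Y$, yields that $G$ and $Y$ are separable and metrizable; in particular $G$ is locally separable and metrizable. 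In the second case one must derive a contradiction with $\pi\chi(Y)=\omega$: a pseudocompact, non-compact remainder of a topological group cannot have countable $\pi$-character, so this alternative does not occur.

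The hard part will be the Lindel\"of branch of the third step: squeezing a $G_\delta$-diagonal of $Y$ (or outright metrizability) out of ``Lindel\"of $+$ countable $\pi$-character $+$ every compact subspace metrizable''. The countable $\pi$-character has to be used essentially — none of the remaining hypotheses, nor the Lindel\"of property, suffices by itself (the double arrow space is a compact, first-countable, non-metrizable Lindel\"of $p$-space, and Mr\'owka-type $\Psi$-spaces are pseudocompact, first countable, and have all compact subspaces metrizable, yet are not metrizable), so the argument must genuinely interlace the point-countable local $\pi$-bases with the second-countable compact pieces and the Lindel\"of covering property. A subsidiary difficulty is the exclusion of the pseudocompact alternative, which rests on a separate structural property of pseudocompact remainders of topological groups rather than on the explicit hypotheses of the lemma.
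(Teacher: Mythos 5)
There is a genuine gap here — in fact the proposal is a frame whose two load-bearing steps are both left unproved. (Note also that the paper offers no proof of this lemma to compare against: it is imported verbatim from \cite{LC}.) First, you are aiming at the wrong, strictly stronger, conclusion. The lemma asserts only that $G$ is \emph{locally} separable and metrizable; you set out to prove that $G$ and $Y$ are globally separable and metrizable. That global statement is precisely what Theorem~\ref{t0} and its companions labour to establish \emph{after} invoking this lemma, and only under additional global hypotheses on $Y$ (a point-countable $p$-metabase on all of $Y$, the decomposition $G=\oplus_{\alpha}G_{\alpha}$, countable networks, and \cite[Theorem 5]{A1}); if it followed from the hypotheses of Lemma~\ref{l6} alone, most of Section~2 would be redundant. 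Unsurprisingly, the step you flag as ``the hard part'' — extracting a $G_{\delta}$-diagonal of $Y$ from ``Lindel\"of $+$ countable $\pi$-character $+$ compact subspaces metrizable'' — is exactly the step that would prove this stronger statement, and you give no construction for it, only the assertion that one must ``interlace'' the ingredients. That is not an argument, and the implication is far from clear: countable $\pi$-character is a very weak local property and does not propagate to a diagonal condition.

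Second, the pseudocompact branch is closed by fiat. You assert that a pseudocompact, non-compact remainder of a topological group cannot have countable $\pi$-character, cite nothing, and concede that this ``rests on a separate structural property \dots rather than on the explicit hypotheses of the lemma.'' I know of no such theorem, and more importantly you have discarded the very hypothesis designed to handle this branch: you reduce ``every countably compact subset of $U(y)$ is metrizable'' to ``every compact subset of $Y$ is metrizable,'' but the surplus content of the hypothesis is that countably compact (indeed $\omega$-bounded) subsets of $U(y)$ are metrizable and hence \emph{compact} — which is exactly the mechanism by which the non-Lindel\"of alternative ($Y$ $\omega$-bounded, cf.\ \cite[Theorem 3.12]{A2} and the proof of Theorem~\ref{t6}) gets killed. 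A smaller but real slip: the dichotomy you invoke says that the remainder $Y$ is Lindel\"of or pseudocompact; Lindel\"ofness of $Y$ makes $G$ of countable type, hence a paracompact $p$-space, not a Lindel\"of $p$-space, so your subsequent appeal to Lemma~\ref{l0} in that branch does not get off the ground as written.
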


\begin{lemma} \label{l1}
Suppose that $X$ has a point-countable $p$-metabase. Then each
countably compact subset of $X$ is a compact, metrizable,
$G_{\delta}$-subset\footnote{A subset $K$ of $X$ is called a {\it
$G_{\delta}$-subset} of $X$ if $K$ is the intersection of countably
open subsets of $X$.} of $X$.
\end{lemma}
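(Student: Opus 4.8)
The plan is to fix a point-countable $p$-metabase $\mathcal{A}$ of $X$ and a countably compact subspace $K\subseteq X$, and to establish, in this order, that $K$ is metrizable, that $K$ is compact, and that $K$ is a $G_{\delta}$-subset of $X$. The first two assertions are intrinsic to $K$, while the third is where the ambient family $\mathcal{A}$ really enters.

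For metrizability and compactness I would first work inside $K$. The trace $\mathcal{A}|_{K}=\{A\cap K:A\in\mathcal{A}\}$ is a point-countable $p$-metabase of the space $K$, and the decisive step is to replace it by a \emph{countable} $p$-metabase of $K$. Since the $p$-metabase condition refers to the interior of a finite union, point-countability alone does not bound the supply of witnessing finite subfamilies, so one combines it with countable compactness in a Mi\v{s}\v{c}enko-type closing-off: recursively pick distinct $x_{\alpha},y_{\alpha}\in K$ together with a finite witness $\mathcal{F}_{\alpha}\subseteq\mathcal{A}|_{K}$ for $(x_{\alpha},y_{\alpha})$, keeping $x_{\alpha}$ out of the closure of the union of the sets chosen before stage $\alpha$; the recursion must stop at a countable ordinal, since otherwise $\{x_{\alpha}\}$ would be an infinite subset of $K$ with no accumulation point. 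Once $K$ has a countable $p$-metabase, the finite unions of its members form a countable $T_{1}$-point-separating open cover, and a standard argument (the interior clause being what makes a $p$-metabase behave like a base on compacta, as in the Burke--Davis framework) shows $K$ is Lindel\"{o}f, hence compact, and that the finite intersections of those open sets form a countable base, so $K$ is second countable and metrizable.

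For the last assertion, $K$ is by now compact metrizable, hence separable; fixing a countable dense $D\subseteq K$, the subfamily $\mathcal{A}_{K}=\{A\in\mathcal{A}:A\cap K\neq\emptyset\}$ is countable, because each nonempty relatively open set $A\cap K$ meets $D$ and $\mathcal{A}$ is point-countable at every point of $D$. Given $y\in X\setminus K$, for each $x\in K$ the $p$-metabase gives a finite $\mathcal{F}_{x}\subseteq\mathcal{A}$ with $x\in(\cup\mathcal{F}_{x})^{\circ}\subseteq\cup\mathcal{F}_{x}\subseteq X\setminus\{y\}$; discarding the members of $\mathcal{F}_{x}$ that miss the compact set $K$ and absorbing the loss into a fixed neighbourhood of $K$, one may take $\mathcal{F}_{x}\subseteq\mathcal{A}_{K}$. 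Compactness of $K$ yields finitely many $x_{1},\dots,x_{m}$ with $K\subseteq\bigcup_{i\le m}(\cup\mathcal{F}_{x_{i}})^{\circ}=:W_{y}$, an open set built from a finite subfamily of the countable family $\mathcal{A}_{K}$, with $K\subseteq W_{y}$ and $y\notin W_{y}$. There are only countably many open sets obtainable in this way, and the intersection of those of them that contain $K$ equals $K$; hence $K$ is a $G_{\delta}$-subset of $X$.

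The step I expect to be the main obstacle is the interaction between the $p$-metabase and countable compactness: both the closing-off reduction to a countable $p$-metabase on $K$ and, in the last paragraph, the passage from the witnesses $\mathcal{F}_{x}\subseteq\mathcal{A}$ to witnesses inside the countable family $\mathcal{A}_{K}$ require controlling the closures of the chosen finite unions rather than the unions themselves, and it is the interior clause of the $p$-metabase --- not mere point-separation --- that must be exploited to make these work.
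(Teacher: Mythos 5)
Your overall plan (first compact and metrizable, then $G_{\delta}$) matches the paper's, but the $G_{\delta}$ step has a genuine gap, and it sits exactly where you predicted trouble. You pass from the witnesses $\mathcal{F}_{x}\subseteq\mathcal{A}$ to witnesses inside the countable family $\mathcal{A}_{K}=\{A\in\mathcal{A}:A\cap K\neq\emptyset\}$ by ``discarding the members of $\mathcal{F}_{x}$ that miss $K$ and absorbing the loss into a fixed neighbourhood of $K$.'' This reduction does not go through: a member of $\mathcal{F}_{x}$ disjoint from $K$ can be indispensable for the interior clause $x\in(\cup\mathcal{F}_{x})^{\circ}$. For instance, with $K=\{0\}$ in $\mathbb{R}$ and $\mathcal{F}_{x}=\{\{0\},\,(-1,0)\cup(0,1)\}$, the union has $0$ in its interior, but deleting the member that misses $K$ leaves a union with empty interior; and there is no ``fixed neighbourhood of $K$'' to absorb such sets, since discarded members may accumulate on $K$ and vary with $y$. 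A secondary problem is that your countability argument for $\mathcal{A}_{K}$ uses that $A\cap K$ is relatively open in $K$ and hence meets the dense set $D$; but in the definition of a $p$-metabase (Burke--Michael's condition (1.5)) the members of $\mathcal{A}$ are not required to be open --- only the finite unions must have the relevant point in their interior --- so $A\cap K$ need not meet $D$ and $\mathcal{A}_{K}$ need not be countable.

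The paper avoids both difficulties with a different counting device: the generalized Mi\v{s}\v{c}enko lemma of Yan and Lin, which asserts that a compact set admits only countably many \emph{minimal finite neighbourhood-covers} by members of a point-countable family. For each $x\notin K$, compactness of $K$ yields a finite neighbourhood-cover of $K$ contained in $X\setminus\{x\}$, which is then shrunk to a minimal one; since only countably many minimal covers $\mathcal{V}(n)$ occur, $K=\bigcap_{n}(\cup\mathcal{V}(n))^{\circ}$ is a $G_{\delta}$. Note that a minimal neighbourhood-cover may well contain members disjoint from $K$ (as in the example above); it is minimality, not meeting $K$, that bounds the count --- this is the idea your argument is missing. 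For compactness and metrizability of $K$ the paper simply cites Burke--Michael; your closing-off sketch aims to reprove that theorem, but the stopping condition of the recursion and the reason termination produces a countable $p$-metabase of $K$ are not pinned down, so it cannot yet substitute for the citation. To repair your proof of the $G_{\delta}$ part you would need either to prove the Yan--Lin lemma or to find another way to bound the supply of witnessing finite unions without discarding members that miss $K$.
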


\begin{proof}
Suppose that $\mathcal{U}$ is a point-countable $p$-metabase of $X$,
 and that $K$ is a countably compact subset of $X$. Then $K$ is compact by
\cite{BD}. According to a generalized Mi$\breve{s}\breve{c}$enko's
Lemma in \cite[Lemma 6]{YL}, there are only countably many minimal
neighborhood-covers\footnote{Let $\mathcal{P}$ be a collection of
subsets of $X$ and $A\subset X$. The collection $\mathcal{P}$ is a {\it
neighborhood-cover} of $A$ if $A\subset (\cup\mathcal{P})^{\circ}$.
A neighborhood-cover $\mathcal{P}$ of $A$ is a {\it minimal
neighborhood-cover} if for each $P\in \mathcal{P}$,
$\mathcal{P}\setminus\{P\}$ is not a neighborhood-cover of $A$.} of
$K$ by finite elements of $\mathcal{U}$, say $\{\mathcal{V}(n):n\in
\mathbb{N}\}$. Let $V(n)=\cup\mathcal{V}(n)$. Then
$K\subset\cap\{V(n):n\in \mathbb{N}\}$. Suppose that $x\in
X\setminus K$. For each point $y\in K$, there is an
$\mathcal{F}_{y}\in\mathcal{U}^{<\omega}$ with $y\in
(\cup\mathcal{F}_{y})^{\circ}\subset \cup\mathcal{F}_{y}\subset
X-\{x\}$. Then there is some sub-collection of
$\cup\{\mathcal{F}_{y}:y\in K\}$ is a minimal finite
neighborhood-covers of $K$ since $K$ is compact. Therefore, we
obtain one of the collections $\mathcal{V}(n)$ with $K\subset
V(n)=\cup\mathcal{V}(n)\subset X-\{x\}$.
\end{proof}

\begin{lemma} \label{l2}
Suppose that $X$ is a Lindel$\ddot{o}$f space with locally a
point-countable $p$-metabase. Then $X$ has a point-countable
$p$-metabase.
\end{lemma}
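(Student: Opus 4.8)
The plan is to prove that a Lindelöf space $X$ that locally has a point-countable $p$-metabase globally has one, by first extracting a countable subcover witnessing the local property, and then taking a suitable union of the local $p$-metabases together with enough open sets to handle separation across the pieces.

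First I would, for each $x \in X$, fix an open neighborhood $U_x$ of $x$ that carries a point-countable $p$-metabase $\mathcal{U}_x$ (of $U_x$ as a subspace). Since $X$ is Lindelöf, the cover $\{U_x : x \in X\}$ admits a countable subcover $\{U_n : n \in \mathbb{N}\}$, with associated point-countable $p$-metabases $\mathcal{U}_n$ of $U_n$. Next, I would set $\mathcal{U} = \bigcup_{n \in \mathbb{N}} \mathcal{U}_n$; since each element of $\mathcal{U}_n$ is open in $U_n$ and $U_n$ is open in $X$, every element of $\mathcal{U}$ is open in $X$. Point-countability of $\mathcal{U}$ follows because a point $x$ lies in only those $U_n$ containing it (possibly all of them, but that is only countably many), and within each such $U_n$ it lies in only countably many members of $\mathcal{U}_n$, so $x$ lies in at most countably many members of $\mathcal{U}$.

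The remaining work is to verify the $p$-metabase condition: given distinct $x, y \in X$, I must produce a finite $\mathcal{F} \in \mathcal{U}^{<\omega}$ with $x \in (\cup\mathcal{F})^{\circ} \subset \cup\mathcal{F} \subset X \setminus \{y\}$. Pick $n$ with $x \in U_n$. If $y \notin U_n$, then applying the $p$-metabase $\mathcal{U}_n$ of $U_n$ to the pair $x, y'$ — where I would instead use that $x$ has, inside $U_n$, a finite subfamily $\mathcal{F} \subset \mathcal{U}_n$ with $x$ in the interior (relative to $U_n$, hence relative to $X$, since $U_n$ is open) of $\cup\mathcal{F}$ and $\cup\mathcal{F} \subset U_n \subset X \setminus \{y\}$ — this requires only that $x \ne y$ is irrelevant and that some member of $\mathcal{U}_n$ contains $x$; more carefully, I should first shrink to a finite neighborhood-cover of $\{x\}$ inside $U_n$ using that $\mathcal{U}_n$ is a $p$-base/$p$-metabase separating $x$ from any chosen point of $U_n \setminus \{x\}$, or simply use that $x \in (\cup\mathcal{F})^\circ$ can be arranged with $\cup \mathcal F \subseteq U_n$. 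If instead $y \in U_n$, then since $\mathcal{U}_n$ is a $p$-metabase of $U_n$ there is a finite $\mathcal{F} \subset \mathcal{U}_n \subset \mathcal{U}$ with $x \in (\cup\mathcal{F})^{\circ_{U_n}} \subset \cup\mathcal{F} \subset U_n \setminus \{y\} \subset X \setminus \{y\}$, and $(\cup\mathcal{F})^{\circ_{U_n}} = (\cup\mathcal{F})^{\circ_X}$ because $U_n$ is open in $X$.

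The main obstacle I anticipate is the first case, $y \notin U_n$: the raw local $p$-metabase $\mathcal{U}_n$ only promises separation between pairs of points of $U_n$, so I cannot directly invoke it to separate $x$ from the outside point $y$. The fix — and the only genuinely delicate point — is to ensure that there is a finite subfamily of $\mathcal{U}_n$ whose union contains $x$ in its interior and stays inside $U_n$; this follows by applying the $p$-metabase property to $x$ and any point $z \in U_n \setminus \{x\}$ (such $z$ exists unless $U_n = \{x\}$, in which case $U_n$ itself is the desired open neighborhood and $\{x\}$ is clopen, handled trivially), yielding $\mathcal{F}$ with $x \in (\cup\mathcal{F})^\circ \subseteq \cup\mathcal{F} \subseteq U_n$, and then $\cup\mathcal{F} \subseteq U_n \subseteq X \setminus \{y\}$ automatically. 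Assembling these cases completes the proof; everything else is routine bookkeeping about interiors in open subspaces and countable unions of point-countable families.
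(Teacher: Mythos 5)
Your construction (take a countable subcover by Lindel\"{o}fness, then form the union of the local point-countable $p$-metabases) is exactly the paper's proof, which merely asserts that the resulting family is ``obviously'' a point-countable $p$-metabase; your verification of the cross-piece case $y\notin U_{n}$ --- using that the finite subfamily produced inside $U_{n}$ is automatically contained in $U_{n}\subset X\setminus\{y\}$ --- is precisely the content hidden behind that ``obviously,'' and it is correct (the degenerate case $U_{n}=\{x\}$ being absorbed, as you indicate, by adjoining the countably many sets $U_{n}$ themselves, which preserves point-countability). So the proposal is correct and takes essentially the same approach as the paper, just with the routine details actually written out.
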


\begin{proof}
For each point $x\in X$, there is an open neighborhood $U(x)$ with
$x\in U(x)$ such that $U(x)$ has a point-countable $p$-metabase
$\mathcal{F}_{x}$. Let $\mathcal{U}=\{U(x): x\in X\}$. Since $X$ is
Lindel\"{o}f, it follows that there exists a countable subfamily
$\mathcal{U}^{\prime}\subset \mathcal{U}$ such that
$X=\cup\mathcal{U}^{\prime}$. Denoted $\mathcal{U}^{\prime}$ by
$\{U_{x_{i}}: i\in\mathbb{N}\}$. Obviously,
$\mathcal{F}=\bigcup_{i}\mathcal{F}_{x_{i}}$ is a point-countable
$p$-metabase for $X$.
\end{proof}

\begin{theorem} \label{t0}
Suppose that $G$ is a non-locally compact topological group, and
that $Y=bG\setminus G$ has locally a point-countable $p$-metabase.
Then $G$ and $bG$ are separable and metrizable if $\pi$-character of
$Y$ is countable .
\end{theorem}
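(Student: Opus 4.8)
The plan is to follow the by-now-standard Arhangel'ski\v{\i}–Liu strategy for "local property on the remainder $\Rightarrow$ $G$ and $bG$ separable metrizable," splitting into the two cases according to whether $G$ is Lindel\"of or not. First I would dispose of the case where $G$ is \emph{not} Lindel\"of. In that case, by a theorem of Arhangel'ski\v{\i} (the usual dichotomy used in this circle of results), both $G$ and $Y=bG\setminus G$ are Lindel\"of $p$-spaces whenever $Y$ has a suitable local structure; more directly, one argues that since $Y$ has locally a point-countable $p$-metabase, Lemma~\ref{l1} applies locally to show every countably compact subset of a suitable neighborhood $U(y)$ of each $y\in Y$ is compact and metrizable. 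Then Lemma~\ref{l6} gives that $G$ is locally separable and metrizable (here is exactly where the hypothesis $\pi\chi(Y)\le\omega$ is consumed). A locally separable metrizable topological group is a topological sum of separable metrizable subgroups, hence is paracompact; combined with the failure of Lindel\"ofness this forces $G$ to contain an uncountable discrete closed subspace of a specific form, and one derives the metrizability of $G$ (and then of $bG$, since $Y$ would be locally compact or one invokes the Henriksen–Isbell characterization via countable type) — or, more cleanly, a locally separable metrizable non-locally-compact group that is a remainder-complement must already be separable metrizable by Liu's argument in \cite{LC}. I would cite \cite{LC} for this bookkeeping rather than redo it.

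The substantive case is when $G$ \emph{is} Lindel\"of. Then $bG$ is a compactification of a Lindel\"of space, $Y=bG\setminus G$ is Lindel\"of, and I would show $Y$ is in fact a Lindel\"of $p$-space. Since $G$ is a Lindel\"of topological group it is a Lindel\"of $p$-space (Lindel\"of topological groups are of countable type, and one upgrades to $p$-space in this setting), so by Lemma~\ref{l0} every remainder of $G$ — in particular $Y$ — is a Lindel\"of $p$-space. Now apply Lemma~\ref{l2}: $Y$ is Lindel\"of and has locally a point-countable $p$-metabase, so $Y$ has a (global) point-countable $p$-metabase. By Lemma~\ref{l1}, every countably compact subset of $Y$ is compact, metrizable, and a $G_{\delta}$-subset of $Y$. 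A Lindel\"of $p$-space all of whose compact subsets are metrizable $G_\delta$-sets, together with the point-countable $p$-metabase, is metrizable: a Lindel\"of $p$-space maps perfectly onto a separable metrizable space, the fibers are compact metrizable, and the point-countable $p$-network structure makes this perfect map an upgrade to a metrization (this is where one quotes the $p$-space/$p$-metabase metrization machinery from \cite{BD}, \cite{Ls}, \cite{YL}). Hence $Y$ is separable and metrizable, so in particular $Y$ has a $G_\delta$-diagonal, and we are back in the situation of Arhangel'ski\v{\i}'s theorem from \cite{A1}: a non-locally-compact topological group whose remainder has a $G_\delta$-diagonal is separable metrizable, and then $bG = G \cup Y$ is separable metrizable as well.

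The main obstacle I anticipate is the metrization step for the Lindel\"of $p$-space $Y$ from the point-countable $p$-metabase. A point-countable $p$-metabase is considerably weaker than a point-countable base, so one cannot simply cite Mi\v{s}\v{c}enko's lemma directly; the correct tool is the generalized Mi\v{s}\v{c}enko lemma of \cite[Lemma 6]{YL} (already used inside Lemma~\ref{l1}) applied not just to fixed compacta but along a perfect map witnessing the $p$-space structure, to show the image is first-countable and hence metrizable and separable. A secondary delicate point is making sure the "locally" hypothesis genuinely globalizes: Lemma~\ref{l2} needs $Y$ Lindel\"of, which is why establishing that $Y$ is a Lindel\"of $p$-space \emph{before} invoking Lemma~\ref{l2} is essential, and why the non-Lindel\"of case for $G$ must be handled separately at the start. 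Once $Y$ is separable metrizable the rest is routine and follows \cite{A1, LC} verbatim.
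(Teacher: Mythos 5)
Your opening and closing moves match the paper's: Lemmas \ref{l1} and \ref{l6} give that $G$ is locally separable and metrizable, and the endgame is to reduce to Arhangel'ski\v{\i}'s theorem from \cite{A1} that a remainder with a $G_{\delta}$-diagonal forces $G$ and $bG$ to be separable and metrizable. But there are two genuine problems in between. First, your case split on whether $G$ is Lindel\"of is both unnecessary and mishandled. It is unnecessary because once $G$ is locally separable and metrizable it is a $p$-space, hence of countable type, so $Y$ is Lindel\"of by Henriksen--Isbell with no further hypothesis on $G$; this is exactly how the paper proceeds, and no dichotomy arises. It is mishandled because the claim in your ``substantive case'' that a Lindel\"of topological group is of countable type (and hence a Lindel\"of $p$-space) is false in general --- Lindel\"of topological groups need not be $p$-spaces (suitable function-space groups $C_p(X)$ are standard counterexamples); what would rescue that step is the previously established local metrizability of $G$, which you do not invoke there. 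Moreover, your non-Lindel\"of branch cannot end where you leave it: the theorem concludes that $G$ is separable, so a genuinely non-Lindel\"of $G$ must be ruled out by contradiction or absorbed into a single argument, and ``I would cite \cite{LC} for this bookkeeping'' does neither.

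Second, and more seriously, the actual content of the paper's proof is precisely the part you defer. The work is to show that $Y$ has a $G_{\delta}$-diagonal when $G$ is only locally separable metrizable, i.e.\ a topological sum $G=\oplus_{\alpha}G_{\alpha}$ of (possibly uncountably many) separable metrizable clopen pieces. The paper takes $F\subset Y$ to be the compact set of points of $bG$ at which $\{G_{\alpha}\}$ fails to be locally finite, shows via Lemma \ref{l1} that $F$ is separable metrizable and a $G_{\delta}$-subset of $Y$, and shows that each point of $M=Y\setminus F$ has a neighborhood contained in $\overline{L}$ for some separable metrizable clopen $L\subset G$, so that $\overline{L}\setminus L$ is a Lindel\"of $p$-space (Lemma \ref{l0}) with a point-countable $p$-metabase and hence separable metrizable by \cite{GMT1}; since $M$ is Lindel\"of ($F$ being a compact $G_{\delta}$ in the Lindel\"of space $Y$), it follows that $Y$ has a countable network and therefore a $G_{\delta}$-diagonal. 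None of this decomposition argument appears in your proposal; your plan only covers the easy situation in which $G$ is already a single separable metrizable piece. You would need to supply this step (or an equivalent) for the proof to go through.
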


\begin{proof}
It is easy to see that $G$ is locally separable and metrizable by
Lemmas~\ref{l6} and \ref{l1}. Then $G$ is a $p$-space.  Hence $Y$ is
Lindel\"{o}f by Henriksen and Isbell's theorem. From Lemma~\ref{l2}
it follows  that $Y=bG\setminus G$ has a point-countable
$p$-metabase.

Claim: The space $Y$ has a $G_{\delta}$-diagonal.

Put $G=\oplus_{\alpha\in \wedge}G_{\alpha}$, where $G_{\alpha}$ is a
separable and metrizable subset for each $\alpha\in \wedge$. Let
$\zeta =\{G_{\alpha}:\alpha\in \wedge\}$, and let $F$ be the set of
all points of $bG$ at which $\zeta$ is not locally finite. Since
$\zeta$ is discrete in $G$, it follows that $F\subset bG\setminus
G$. It is easy to see that $F$ is compact. Therefore, it follows
from Lemma~\ref{l1} that $F$ is separable and metrizable. Hence $F$
has a countable network.

Let $M=Y\setminus F$. For each point $y\in M$, there is an open
neighborhood $O_{y}$ in $bG$ such that $\overline{O_{y}}\cap
F=\emptyset$. Since $\zeta$ is discrete, $\overline{O_{y}}$ meets at
most finitely many $G_{\alpha}$. Let
$L=\cup\{G_{\alpha}:G_{\alpha}\cap\overline{O_{y}}\neq\emptyset\}$.
Then $L$ is separable and metrizable. By Lemma~\ref{l0},
$\overline{L}\setminus L$ is a Lindel\"{o}f $p$-space. Obviously,
$\overline{L}\setminus L\subset Y$. Therefore,
$\overline{L}\setminus L$ has a point-countable $p$-metabase. Hence
$\overline{L}\setminus L$ is separable and metrizable by
\cite{GMT1}, which implies that $\overline{L}$ has a countable
network. It follows that $\overline{L}$ is separable and metrizable.
Clearly, $O_{y}\subset \overline{L}$ and $O_{y}\cap M$ is separable
and metrizable. Therefore, $M$ is locally separable and metrizable.
From Lemma~\ref{l1} it follows that each compact subset of $Y$ is a
$G_{\delta}$-subset of $Y$. Since $F$ is compact and $Y$ is
Lindel\"{o}f, it follows that $M$ is Lindel\"{o}f. Therefore, $M$ is
separable. Then $M$ has a countable network. So $Y$ has a countable
network, which implies that $Y$ has a $G_{\delta}$-diagonal. Thus,
Claim is verified.

Therefore, $G$ and $bG$ are separable and metrizable by
\cite[Theorem 5]{A1}.
\end{proof}

\begin{corollary}
Suppose that $G$ is a non-locally compact topological group, and
that $Y=bG\setminus G$ has locally a point-countable $p$-base. Then
$G$ and $bG$ are separable and metrizable if $\pi$-character of $Y$
is countable.
\end{corollary}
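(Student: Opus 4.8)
The corollary follows almost immediately from Theorem~\ref{t0} once we observe that a point-countable $p$-base is a special case of a point-countable $p$-metabase. The plan is to verify this implication and then invoke the theorem verbatim.

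First I would recall the definitions side by side: a $p$-base $\mathcal{A}$ requires that for distinct $x,y\in X$ there is a single $A\in\mathcal{A}$ with $A$ open and $x\in A\subset X-\{y\}$, whereas a $p$-metabase only asks for a finite subfamily $\mathcal{F}\in\mathcal{A}^{<\omega}$ with $x\in(\cup\mathcal{F})^{\circ}\subset\cup\mathcal{F}\subset X-\{y\}$. Given a $p$-base $\mathcal{A}$ and a separating element $A$ for the pair $(x,y)$, the singleton family $\mathcal{F}=\{A\}$ witnesses the $p$-metabase condition, since $A$ is open so $(\cup\mathcal{F})^{\circ}=A^{\circ}=A\ni x$ and $\cup\mathcal{F}=A\subset X-\{y\}$. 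Hence every $p$-base is a $p$-metabase, and point-countability is obviously inherited. Consequently, if $Y=bG\setminus G$ has locally a point-countable $p$-base, then it has locally a point-countable $p$-metabase.

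With that observation, the hypotheses of Theorem~\ref{t0} are satisfied: $G$ is a non-locally compact topological group, $Y=bG\setminus G$ has locally a point-countable $p$-metabase, and the $\pi$-character of $Y$ is countable. Applying Theorem~\ref{t0} directly yields that $G$ and $bG$ are separable and metrizable, which is the conclusion of the corollary.

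Since this is a pure specialization argument, there is essentially no obstacle — the only thing to be careful about is that the definition of $p$-metabase uses the interior of the union of a finite family, and one must check that for a single open set the interior operation is vacuous, which it is. I would therefore write the proof in one or two lines: note that a point-countable $p$-base is a point-countable $p$-metabase, and then cite Theorem~\ref{t0}.
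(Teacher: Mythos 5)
Your proof is correct and is exactly the intended argument: the paper states this as an immediate corollary of Theorem~\ref{t0}, relying on the observation that a point-countable $p$-base (being a family of open sets) is a point-countable $p$-metabase via singleton subfamilies. Nothing further is needed.
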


\begin{corollary}\cite{A1}\label{c1}
Suppose that $G$ is a non-locally compact topological group. If
$Y=bG\setminus G$ has a point-countable base, then $G$ and $bG$ are
separable and metrizable.
\end{corollary}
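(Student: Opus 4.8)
\textbf{Proof proposal for Corollary \ref{c1}.} The plan is to deduce this statement directly from the preceding corollary (the one on remainders with locally a point-countable $p$-base), so the only thing to verify is that the hypothesis of that corollary is met, i.e. that a point-countable base of $Y$ yields both ``locally a point-countable $p$-base'' and a countable $\pi$-character.

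First I would observe that if $\mathcal{B}$ is a point-countable base of $Y$, then $\mathcal{B}$ is a point-countable $p$-base of $Y$. Indeed, $\mathcal{B}$ consists of open sets and is point-countable by assumption; and for any two distinct points $x,y\in Y$, since $Y$ is Hausdorff (hence $T_1$) there is a basic open set $B\in\mathcal{B}$ with $x\in B\subset Y\setminus\{y\}$, which is exactly the defining property of a $p$-base ($T_1$-point-separating open cover). In particular $Y$ has locally a point-countable $p$-base, taking the open neighborhood to be $Y$ itself at every point.

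Next I would check that $\pi\chi(Y)\leq\omega$. Fix $y\in Y$. Since $\mathcal{B}$ is point-countable, the subfamily $\mathcal{B}_y=\{B\in\mathcal{B}:y\in B\}$ is countable, and because $\mathcal{B}$ is a base, $\mathcal{B}_y$ is a local base at $y$ in $Y$; hence $\chi(y,Y)\leq\omega$, and therefore $\pi\chi(y,Y)\leq\chi(y,Y)\leq\omega$. As $y$ was arbitrary, $\pi\chi(Y)\leq\omega$. Now applying the preceding corollary, $G$ and $bG$ are separable and metrizable, which completes the proof.

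There is no genuine obstacle here: the statement is a formal consequence of Theorem~\ref{t0} (via its corollary), and the only point requiring a word of justification is the elementary fact that point-countability of a base forces countable character at each point, hence countable $\pi$-character.
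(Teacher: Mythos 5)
Your proposal is correct and is essentially the derivation the paper intends: the corollary is stated without proof precisely because it follows from the preceding corollary once one notes that a point-countable base is a point-countable $p$-base (by the $T_1$ separation) and forces countable character, hence countable $\pi$-character, at every point. Both verifications you give are accurate.
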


Next, we consider the remainders with locally a
$\delta\theta$-base\footnote{Recall that a collection
$\mathcal{B}=\cup_{n}\mathcal{B}_{n}$ of open subsets of a space $X$
is a {\it $\delta\theta$-base} \cite{Gr} if whenever $x\in U$ with
$U$ open, there exist an $n\in \mathbb{N}$ and a $B\in \mathcal{B}$
such that

(i) $1\leq\mbox{ord}(x, \mathcal{B}_{n})\leq\omega$;

(ii) $x\in B\subset U$.} of the topological groups.

\begin{lemma}\label{l9}
Let $X$ be a Lindel\"{o}f space with locally a $\delta\theta$-base.
Then $X$ has a $\delta\theta$-base.
\end{lemma}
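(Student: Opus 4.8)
The plan is to follow the template of the proof of Lemma~\ref{l2}. For each point $x\in X$ fix an open neighborhood $U(x)$ of $x$ carrying a $\delta\theta$-base $\mathcal{B}_{x}=\bigcup_{n\in\mathbb{N}}\mathcal{B}_{x,n}$, where each $\mathcal{B}_{x,n}$ consists of sets open in $U(x)$; since $U(x)$ is open in $X$, these sets are open in $X$ as well. The family $\{U(x):x\in X\}$ is an open cover of $X$, so by the Lindel\"{o}f property there is a countable subcover, say $\{U(x_{i}):i\in\mathbb{N}\}$.

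Next I would amalgamate the pieces carefully. For each pair $(i,n)\in\mathbb{N}\times\mathbb{N}$ put $\mathcal{C}_{i,n}=\mathcal{B}_{x_{i},n}$, re-enumerate the countable set $\{\mathcal{C}_{i,n}:(i,n)\in\mathbb{N}\times\mathbb{N}\}$ as $\{\mathcal{C}_{m}:m\in\mathbb{N}\}$, and set $\mathcal{C}=\bigcup_{m}\mathcal{C}_{m}$, a countable union of families of open subsets of $X$. I claim $\mathcal{C}$ is a $\delta\theta$-base for $X$. To check this, take $x\in U$ with $U$ open in $X$ and choose $i$ with $x\in U(x_{i})$; then $U\cap U(x_{i})$ is an open neighborhood of $x$ in $U(x_{i})$, so the defining property of the $\delta\theta$-base $\mathcal{B}_{x_{i}}$ gives an $n\in\mathbb{N}$ with $1\leq\mathrm{ord}(x,\mathcal{B}_{x_{i},n})\leq\omega$ together with a $B\in\mathcal{B}_{x_{i}}$ satisfying $x\in B\subset U\cap U(x_{i})\subset U$. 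Since $\mathrm{ord}(x,\mathcal{B}_{x_{i},n})$ is the same whether computed in $U(x_{i})$ or in $X$, the level $\mathcal{C}_{m}$ corresponding to the pair $(i,n)$ satisfies $1\leq\mathrm{ord}(x,\mathcal{C}_{m})\leq\omega$, and $B\in\mathcal{C}$ with $x\in B\subset U$. This is exactly the $\delta\theta$-base condition, so $\mathcal{C}$ witnesses that $X$ has a $\delta\theta$-base.

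There is no serious obstacle here; the only points requiring a moment's care are: (a) noting that openness in the neighborhood $U(x)$ upgrades to openness in $X$ precisely because $U(x)$ is itself open; (b) re-indexing by pairs rather than naively taking level-by-level unions $\bigcup_{i}\mathcal{B}_{x_{i},n}$, so that the order of $x$ in each single level stays bounded by $\omega$; and (c) the essential use of the Lindel\"{o}f hypothesis, which keeps the index set of neighborhoods — and hence the total number of levels — countable, as demanded by the definition of a $\delta\theta$-base.
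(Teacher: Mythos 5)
Your proof is correct and follows essentially the same route as the paper's: extract a countable subcover $\{U(x_i)\}$ by the Lindel\"{o}f property and amalgamate the local $\delta\theta$-bases into $\bigcup_{i,n}\mathcal{B}_{x_i,n}$, with the levels indexed by pairs $(i,n)$. The paper dismisses the verification as ``obvious''; your explicit check of the order condition (and the observation that one must not merge levels across different $i$) is exactly the detail being elided there.
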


\begin{proof}
For each point $x\in X$, there is an open neighborhood $U(x)$ with
$x\in U(x)$ such that $U(x)$ has a $\delta\theta$-base
$\mathcal{B}_{x}=\bigcup_{n}\mathcal{B}_{n, x}$. Let
$\mathcal{U}=\{U(x): x\in X\}$. Since $X$ is Lindel\"{o}f, it
follows that there exists a countable subfamily
$\mathcal{U}^{\prime}\subset \mathcal{U}$ such that
$X=\cup\mathcal{U}^{\prime}$. Denoted $\mathcal{U}^{\prime}$ by
$\{U_{x_{i}}: i\in\mathbb{N}\}$. Obviously, $\mathcal{B}=\bigcup_{i,
n}\mathcal{B}_{n, x_{i}}$ is a $\delta\theta$-base for $X$.
\end{proof}

\begin{theorem}
Let $G$ be a non-locally compact topological group. If
$Y=bG\setminus G$ has locally a $\delta\theta$-base. Then $G$ and
$bG$ are separable and metrizable.
\end{theorem}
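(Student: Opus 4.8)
The plan is to mirror the structure of the proof of Theorem~\ref{t0}, replacing the point-countable $p$-metabase hypothesis with the $\delta\theta$-base hypothesis throughout. First I would recall the key facts about $\delta\theta$-bases that make the argument of Theorem~\ref{t0} go through: a space with a $\delta\theta$-base has a $G_\delta$-diagonal (indeed, from a $\delta\theta$-base one extracts a $G_\delta$-diagonal sequence in the standard way), so in particular every compact subset of such a space is metrizable, and a Lindel\"{o}f (or more generally paracompact / regular) space with a $\delta\theta$-base whose compact sets are nicely placed will turn out to be separable metrizable once we also know it has a countable network. I would isolate the analogue of Lemma~\ref{l1}: if $X$ has a $\delta\theta$-base then every countably compact subset of $X$ is compact and metrizable (this is known; a $\delta\theta$-base gives a $G_\delta$-diagonal, and a countably compact space with a $G_\delta$-diagonal is compact and metrizable by Chaber's theorem). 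This lets us invoke Lemma~\ref{l6}: since $Y=bG\setminus G$ has locally a $\delta\theta$-base, each point of $Y$ has a neighborhood in which every countably compact subset is metrizable, and since the $\pi$-character of $Y$ is countable — here I must check that a $\delta\theta$-base forces countable $\pi$-character, or rather countable $\pi$-character at points where $\mathrm{ord}(x,\mathcal{B}_n)$ is countable, which is exactly the defining condition; so $\pi\chi(Y)\le\omega$ — Lemma~\ref{l6} gives that $G$ is locally separable and metrizable.

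Once $G$ is locally separable and metrizable it is a $p$-space (being locally metrizable and, being a topological group hence homogeneous, of countable type), so by the Henriksen--Isbell theorem $Y=bG\setminus G$ is Lindel\"{o}f. Then Lemma~\ref{l9} applies: $Y$ has a $\delta\theta$-base. Now I would run the decomposition argument verbatim from Theorem~\ref{t0}: write $G=\bigoplus_{\alpha\in\wedge}G_\alpha$ with each $G_\alpha$ separable metrizable, let $\zeta=\{G_\alpha\}$, let $F\subset bG\setminus G$ be the (compact) set of points at which $\zeta$ fails to be locally finite; $F$ is a compact subset of $Y$, hence metrizable, hence has a countable network. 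For $M=Y\setminus F$, each $y\in M$ has a neighborhood $O_y$ in $bG$ with $\overline{O_y}\cap F=\emptyset$, so $\overline{O_y}$ meets only finitely many $G_\alpha$; setting $L$ to be the union of those $G_\alpha$, $L$ is separable metrizable, so by Lemma~\ref{l0} the remainder $\overline{L}\setminus L$ is a Lindel\"{o}f $p$-space, and it sits inside $Y$, hence inherits a $\delta\theta$-base. The crucial input is now a citation in place of \cite{GMT1}: a Lindel\"{o}f $p$-space with a $\delta\theta$-base is separable and metrizable — this follows because a (regular) space with a $\delta\theta$-base has a $G_\delta$-diagonal, a $p$-space with a $G_\delta$-diagonal is metrizable (by Borges / Okuyama-type results, or: a $p$-space with a $G_\delta$-diagonal is metrizable), and a Lindel\"{o}f metrizable space is separable. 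Hence $\overline{L}$ has a countable network, $O_y\cap M$ is separable metrizable, so $M$ is locally separable metrizable; since $F$ is compact and $Y$ Lindel\"{o}f, $M$ is Lindel\"{o}f, hence separable, hence has a countable network; combining with the countable network on $F$, $Y$ has a countable network, so $Y$ has a $G_\delta$-diagonal.

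Finally, with $Y=bG\setminus G$ having a $G_\delta$-diagonal (in fact a countable network), \cite[Theorem~5]{A1} — the same theorem used to close Theorem~\ref{t0} — gives that $G$ and $bG$ are separable and metrizable.

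\textbf{Expected main obstacle.} The delicate point is the replacement for \cite{GMT1}: I need the implication ``Lindel\"{o}f $p$-space $+$ $\delta\theta$-base $\Rightarrow$ separable metrizable.'' The route through ``$\delta\theta$-base $\Rightarrow$ $G_\delta$-diagonal'' and ``$p$-space $+$ $G_\delta$-diagonal $\Rightarrow$ metrizable'' should work and only needs standard generalized-metric-space theory, but one must be careful that it is genuinely the $G_\delta$-\emph{diagonal} version and not the stronger $G_\delta^*$-diagonal that is needed, and that regularity (automatic here, since subspaces of compact Hausdorff spaces are Tychonoff) is enough. A secondary point to verify carefully is that the $\delta\theta$-base on $Y$ restricts to a $\delta\theta$-base on the subspace $\overline{L}\setminus L$ — intersecting each $\mathcal{B}_n$ with the subspace preserves the order condition ``$1\le\mathrm{ord}(x,\mathcal{B}_n)\le\omega$'' and the basis condition, so this is routine but should be stated. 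Everything else is a transcription of the Theorem~\ref{t0} argument with ``point-countable $p$-metabase'' systematically replaced by ``$\delta\theta$-base.''
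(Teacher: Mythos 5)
Your overall architecture is exactly the paper's: derive first countability (hence countable $\pi$-character) from the $\delta\theta$-base, apply Lemma~\ref{l6} to get $G$ locally separable metrizable, use Henriksen--Isbell to get $Y$ Lindel\"{o}f, globalize the $\delta\theta$-base via Lemma~\ref{l9}, and then rerun the $F$/$M$/$L$ decomposition of Theorem~\ref{t0}, closing with \cite[Theorem 5]{A1}. The gap is in the single claim you lean on twice, namely that a space with a $\delta\theta$-base has a $G_\delta$-diagonal (``one extracts a $G_\delta$-diagonal sequence in the standard way''). There is no standard way: the $\delta\theta$-base condition only controls, at each point $x$, the order of $x$ in those levels $\mathcal{B}_n$ that happen to work at $x$, and $\mathrm{st}(x,\mathcal{B}_n)$ may well swallow the point $y$ you are trying to separate; a point-countable base is already a ($1$-level) $\delta\theta$-base, and point-countable base does not imply $G_\delta$-diagonal (nor even quasi-$G_\delta$-diagonal). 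This is precisely why Bennett--Byerly--Lutzer prove the ``countably compact subsets are compact metrizable $G_\delta$'s'' statement separately for $\delta\theta$-bases (their Proposition 2.1) and for quasi-$G_\delta$-diagonals (their Proposition 2.3), and why the present paper treats the $\delta\theta$-base case independently of its quasi-$G_\delta$-diagonal theorem. If your implication held, the whole theorem would already be a corollary of Theorem~\ref{t3}.

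The good news is that both facts you tried to reach through that implication are true and are exactly what the paper cites instead: for the reduction to Lemma~\ref{l6}, that every countably compact subset of a space with a $\delta\theta$-base is a compact, metrizable $G_\delta$-set is \cite[Proposition 2.1]{BH} (essentially Aull's theorem that countably compact plus $\delta\theta$-base gives compact metrizable); and for the metrization of $\overline{L}\setminus L$, that a Lindel\"{o}f (indeed paracompact) $p$-space with a $\delta\theta$-base is metrizable is \cite[Corollary 8.3]{Gr}, a genuine metrization theorem of Hodel type, not a consequence of the $G_\delta$-diagonal metrization theorem. Your remaining points --- that a $\delta\theta$-base gives first countability (take, for each $x$, the countably many members of those $\mathcal{B}_n$ with $1\le\mathrm{ord}(x,\mathcal{B}_n)\le\omega$ that contain $x$), that the property passes to subspaces, and that the final step goes through \cite[Theorem 5]{A1} once $Y$ has a countable network --- are all fine and agree with the paper. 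So the proof is repaired simply by replacing your ``$\delta\theta$-base $\Rightarrow G_\delta$-diagonal'' bridge with those two citations.
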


\begin{proof}
Obviously, $Y$ is first countable. By \cite[Propostion 2.1]{BH},
each countably compact subset of $Y$ is a compact, metrizable,
$G_{\delta}$-subset of $Y$. From Lemma~\ref{l6} it follows that $G$
is locally separable and metrizable. Then $G$ is a $p$-space. Hence
$Y$ is Lindel\"{o}f by Henriksen and Isbell's theorem. From
Lemma~\ref{l9} it follows that $Y=bG\setminus G$ has a
$\delta\theta$-base.

By the same notations in Theorem~\ref{t0}, it is easy to see from
\cite[Propostion 2.1]{BH} that $F\subset bG\setminus G$ is compact
and metrizable in view of the proof of Theorem~\ref{t0}. By
\cite[Corollary 8.3]{Gr} and Lemma~\ref{l0}, $\overline{L}\setminus
L$ is separable and metrizable. In view of the proof of
Theorem~\ref{t0}, $G$ and $bG$ are separable and metrizable by
\cite[Propostion 2.1]{BH}.
\end{proof}

\begin{corollary}\cite{LC}
Let $G$ be a non-locally compact topological group. If
$Y=bG\setminus G$ is locally a quasi-developable\footnote{A space
$X$ is {\it quasi-developable} if there exists a sequence
$\{\mathcal {G}_{n}\}_{n}$ of families of open subsets of $X$ such
that for each point $x\in X$, $\{\mbox{st}(x, \mathcal {G}_{n}):n\in
\mathbb{N}, \mbox{st}(x, \mathcal {G}_{n})\not=\emptyset\}$ is a
base at $x$.}. Then $G$ and $bG$ are separable and metrizable.
\end{corollary}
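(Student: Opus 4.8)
The plan is to reduce this statement to the preceding theorem by invoking the classical characterization of quasi-developability: a space is quasi-developable if and only if it has a $\delta\theta$-base (see \cite{Gr}). In particular, every quasi-developable space has a $\delta\theta$-base. Once this is in hand, the corollary is immediate, because "being locally quasi-developable" then coincides with "having locally a $\delta\theta$-base", which is precisely the hypothesis of the preceding theorem.

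Concretely, I would argue as follows. Fix an arbitrary point $y\in Y=bG\setminus G$. By hypothesis there is an open set $U$ of $Y$ with $y\in U$ such that $U$ is quasi-developable; note that $U$, being a subspace of the compact Hausdorff space $bG$, is Tychonoff, so the characterization above applies and yields a $\delta\theta$-base of $U$. Since $y$ was arbitrary, $Y$ has locally a $\delta\theta$-base. Applying the preceding theorem verbatim then gives that $G$ and $bG$ are separable and metrizable.

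I do not expect any genuine obstacle here: all the substantive work has already been carried out in the previous theorem (reduction to the Lindelöf case via Lemma~\ref{l6} and Henriksen--Isbell's theorem, Lemma~\ref{l9}, and the analysis of the decomposition $Y=F\cup M$). The only point requiring a little care is the passage from a quasi-development $\{\mathcal{G}_n\}_n$ of $U$ to a genuine $\delta\theta$-base, i.e.\ arranging, after a routine refinement, that every point of $U$ lies in at most countably many members of each $n$-th family, so that condition (i) in the definition of a $\delta\theta$-base is met. This is exactly the content of the cited equivalence, so no separate argument is needed. (Alternatively, one could bypass the characterization entirely and observe that quasi-developability is inherited by open subspaces and that the analogue of Lemma~\ref{l9} holds for quasi-developable spaces under the Lindelöf hypothesis, and then repeat the proof of the previous theorem with "$\delta\theta$-base" replaced throughout by "quasi-development".)
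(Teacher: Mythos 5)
Your proof is correct and is exactly the (implicit) argument of the paper: the corollary is stated there without any written proof, as an immediate consequence of the preceding theorem via the fact that every quasi-developable space has a $\delta\theta$-base, which is precisely your reduction. One small correction: the characterization in \cite{Gr} (Bennett's theorem) is that a space is quasi-developable if and only if it has a $\theta$-base, not a $\delta\theta$-base --- the latter property is strictly weaker --- but since your argument only uses the implication ``quasi-developable $\Rightarrow$ $\theta$-base $\Rightarrow$ $\delta\theta$-base,'' it goes through unchanged.
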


Finally, we consider the remainders with locally a
c-semistratifiable space of the topological group.

Let $X$ be a topological space. $X$ is called a {\it
c-semistratifiable space}(CSS) \cite{MH} if for each compact subset
$K$ of $X$ and each $n\in\mathbb N$ there is an open set $G(n, K)$
in $X$ such that:

(i) $\cap\{G(n, K):n\in \mathbb{N}\}=K$;

(ii) $G(n+1, K)\subset G(n, K)$ for each $n\in \mathbb{N}$; and

(iii) if for any compact subsets $K, L$ of $X$ with $K\subset L$,
then $G(n, K)\subset G(n, L)$ for each $n\in \mathbb{N}$.

\begin{theorem}\label{t1}
Suppose that $G$ is a non-locally compact topological group, and
that $Y=bG\setminus G$ is locally a $CSS$-space. Then $G$ and $bG$
are separable and metrizable if $\pi$-character of $Y$ is countable.
\end{theorem}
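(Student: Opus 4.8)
The plan is to follow the exact blueprint of Theorem~\ref{t0}, substituting the CSS hypothesis at the three places where the $p$-metabase hypothesis was used. First I would establish that $G$ is locally separable and metrizable. For this I invoke Lemma~\ref{l6}, which requires that each point $y\in Y$ has an open neighborhood $U(y)$ in which every countably compact subset is metrizable; since $Y$ is locally CSS, pick $U(y)$ with a CSS structure and recall that in a CSS-space every countably compact (hence, by the analogue of the argument in Lemma~\ref{l1}, compact) subset is a $G_\delta$-subset, and a compact $G_\delta$ subset of a CSS-space is metrizable because the decreasing sequence $G(n,K)$ of open sets witnessing $\cap_n G(n,K)=K$ gives a countable outer base for $K$, forcing $K$ to be first countable and metrizable by standard compact-space theory. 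With $G$ locally separable and metrizable it is a $p$-space, so by the Henriksen--Isbell theorem $Y=bG\setminus G$ is Lindel\"of.

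Next I would promote the local CSS property of $Y$ to a global one. Here the needed ingredient is a lemma (parallel to Lemma~\ref{l2} and Lemma~\ref{l9}) stating that a Lindel\"of space with locally a CSS structure is itself CSS: cover $Y$ by countably many open CSS-pieces $U_i$ with operators $G_i(n,K)$, and for a compact $K\subset Y$ set $G(n,K)=\bigcup_i G_i(n,K\cap \overline{U_i})$ intersected appropriately with the $U_i$'s — the monotonicity and nesting conditions are inherited, and $\cap_n G(n,K)=K$ follows since each $K\cap\overline{U_i}$ (if nonempty and compact, which holds since $K$ is compact and the $\overline{U_i}$ partition-cover) is recovered inside $U_i$. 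I would state this as a short lemma; it is essentially routine bookkeeping, though one must be slightly careful that $K\cap\overline{U_i}$ need not lie in $U_i$, so one works with the relatively open sets or passes to the open cover refinement, exactly as the proof of Lemma~\ref{l2} sidesteps the analogous issue.

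Then I would run the diagonal-decomposition argument verbatim from Theorem~\ref{t0}. Write $G=\oplus_{\alpha\in\wedge}G_\alpha$ with each $G_\alpha$ separable metrizable, let $\zeta=\{G_\alpha\}$, let $F\subset bG\setminus G$ be the set of points where $\zeta$ fails to be locally finite; $F$ is compact, hence a compact subset of the CSS-space $Y$, hence metrizable with a countable network. For $M=Y\setminus F$, each $y\in M$ has a neighborhood $O_y$ in $bG$ with $\overline{O_y}\cap F=\emptyset$, so $\overline{O_y}$ meets only finitely many $G_\alpha$; letting $L$ be their union, $L$ is separable metrizable, so by Lemma~\ref{l0} $\overline{L}\setminus L$ is a Lindel\"of $p$-space, and being a subspace of $Y$ it is Lindel\"of CSS, hence (a Lindel\"of $p$-space that is CSS is) separable metrizable — here I would cite the appropriate metrization result for CSS-spaces, the analogue of the \cite{GMT1} / \cite{BH} citation used in the earlier proofs, namely that a $p$-space (or a Lindel\"of $p$-space) with a CSS structure is metrizable. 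Then $\overline{L}$ has a countable network, $O_y\cap M$ is separable metrizable, so $M$ is locally separable metrizable; since $F$ is compact and $Y$ Lindel\"of, $M$ is Lindel\"of, hence separable, hence has a countable network; thus $Y=F\cup M$ has a countable network, so $Y$ has a $G_\delta$-diagonal, and finally $G$ and $bG$ are separable and metrizable by \cite[Theorem 5]{A1}.

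The main obstacle I expect is locating or supplying the two "structural" facts about CSS-spaces that play the role the $p$-metabase results played before: (a) that a compact (equivalently, in the relevant setting, countably compact) subset of a CSS-space is a metrizable $G_\delta$, and (b) that a Lindel\"of $p$-space with a CSS structure is separable and metrizable. Fact (a) is reasonably standard — the defining nested sequence $G(n,K)$ gives $K$ a countable neighborhood base in the CSS-space, which together with compactness yields metrizability of $K$ — so the real work is pinning down a clean reference (or one-line proof) for (b); once both are in hand, the rest of the argument is a word-for-word transcription of the proof of Theorem~\ref{t0} with "point-countable $p$-metabase" replaced by "CSS structure" throughout, including the promotion lemma from the local to the global property.
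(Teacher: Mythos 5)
Your proposal follows the paper's proof essentially step for step: metrizability of countably compact subsets of a CSS-space feeds into Lemma~\ref{l6} (together with the countable $\pi$-character hypothesis) to make $G$ locally separable and metrizable, Henriksen--Isbell makes $Y$ Lindel\"of, a covering lemma promotes the local CSS property to a global one, and the $F$/$L$ decomposition of Theorem~\ref{t0} finishes via a countable network and \cite[Theorem 5]{A1}. The paper supplies exactly the two structural facts you label (a) and (b) by citing \cite[Propositions 3.5 and 3.8]{BH}, and handles the promotion lemma the same way.

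The one genuine defect is your justification of fact (a). A decreasing sequence of open sets $G(n,K)$ with $\bigcap_{n}G(n,K)=K$ is not a neighborhood base of $K$, and even a compact set with a countable outer base need not be metrizable: any non-metrizable compact space is an open (hence $G_{\delta}$) subset of itself, and the double arrow space is compact, first countable and perfectly normal yet non-metrizable, so ``compact $G_{\delta}$ with countable outer base'' plus ``standard compact-space theory'' does not yield metrizability. The metrizability of countably compact subsets of a CSS-space is a real theorem that uses the monotonicity condition (iii) of the operator $K\mapsto G(n,K)$; it is precisely \cite[Proposition 3.8(c)]{BH} (going back to \cite{MH}) and must be cited or proved, not derived from the $G_{\delta}$ property alone. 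Once that citation is in place, together with \cite[Proposition 3.5]{BH} for the Lindel\"of promotion of the local CSS property and \cite[Proposition 3.8]{BH} for ``Lindel\"of $p$-space which is CSS is separable and metrizable,'' your argument coincides with the paper's.
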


\begin{proof}
By \cite[Proposition 3.8(c)]{BH} and the definition of $CSS$-spaces,
it is easy to see that each countably compact subset of $Y$ is a
compact, metrizable, $G_{\delta}$-subset of $Y$. From Lemma~\ref{l6}
it follows that $G$ is locally separable and metrizable. Then $G$ is
a $p$-space. Hence $Y$ is Lindel\"{o}f by Henriksen and Isbell's
theorem. From Lemma~\ref{l9} it follows that $Y=bG\setminus G$ is a
$CSS$-space by \cite[Proposition 3.5]{BH}.

By the same notations in Theorem~\ref{t0}, it is easy to see from
\cite[Proposition 3.8]{BH} that $F\subset bG\setminus G$ is compact
and metrizable in view of the proof of Theorem~\ref{t0}. By
\cite[Proposition 3.8]{BH}, $\overline{L}\setminus L$ is separable
and metrizable. In view of the proof of Theorem~\ref{t0}, it is easy
to see that $G$ and $bG$ are separable and metrizable.
\end{proof}

\begin{corollary}
Suppose that $G$ is a non-locally compact topological group, and
that $Y=bG\setminus G$ is locally a
$\sigma^{\sharp}$-space\footnote{A space $X$ is called a {\it
$\sigma^{\sharp}$-space} \cite{MH} if $X$ has a
$\sigma$-closure-preserving closed $p$-network.}. Then $G$ and $bG$
are separable and metrizable if $\pi$-character of $Y$ is countable.
\end{corollary}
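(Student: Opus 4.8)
The plan is to reduce the $\sigma^{\sharp}$-space case to the $CSS$-space case already handled in Theorem~\ref{t1}. The key observation is that a $\sigma^{\sharp}$-space is, in particular, a $CSS$-space: indeed, a $\sigma$-closure-preserving closed $p$-network is a $\sigma$-closure-preserving closed network that is point-separating in the appropriate sense, and such a network yields the required family of open sets $G(n,K)$ around each compact $K$ by the standard argument (for a compact $K$ and $n\in\mathbb{N}$, take $G(n,K)$ to be the complement of the union of the first $n$ closure-preserving subfamilies consisting of those network members that miss $K$; closure-preservation makes this union closed, and the $p$-network property forces the intersection over all $n$ to be exactly $K$, with monotonicity in both $n$ and $K$ immediate from the construction). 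This implication is presumably recorded in \cite{MH}, so I would simply cite it.

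Granting that, the proof is one line: if $Y=bG\setminus G$ is locally a $\sigma^{\sharp}$-space, then $Y$ is locally a $CSS$-space, and $\pi$-character of $Y$ is countable by hypothesis, so Theorem~\ref{t1} applies directly to give that $G$ and $bG$ are separable and metrizable. No new lemmas about $\sigma^{\sharp}$-spaces, Lindel\"of sums, or remainders are needed beyond what is already in place.

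The only point requiring any care is the implication ``$\sigma^{\sharp}$-space $\Rightarrow$ $CSS$-space,'' and in particular checking that condition (iii) of the $CSS$ definition — monotonicity $G(n,K)\subset G(n,L)$ whenever $K\subset L$ — is respected by the construction. This is the step I expect to be the main (and essentially the only) obstacle, though it is routine: if $K\subset L$ then every network member missing $L$ also misses $K$, so the subfamily used to build $G(n,L)$ is contained in the one used for $G(n,K)$, whence $G(n,L)$ is the complement of a smaller closed set and contains $G(n,K)$. Properties (i) and (ii) are immediate. I would phrase the final proof as: ``By \cite{MH}, every $\sigma^{\sharp}$-space is a $CSS$-space, so $Y$ is locally a $CSS$-space; the conclusion now follows from Theorem~\ref{t1}.''

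\begin{proof}
Every $\sigma^{\sharp}$-space is a $CSS$-space (see \cite{MH}): given a $\sigma$-closure-preserving closed $p$-network $\mathcal{P}=\bigcup_{n}\mathcal{P}_{n}$ for a space $Z$, a compact set $K\subset Z$, and $n\in\mathbb{N}$, the set $G(n,K)=Z\setminus\bigcup\{P\in\bigcup_{i\leq n}\mathcal{P}_{i}:P\cap K=\emptyset\}$ is open by closure-preservation, satisfies $\bigcap_{n}G(n,K)=K$ since $\mathcal{P}$ is a point-separating closed $p$-network, is decreasing in $n$, and is monotone in $K$ because $K\subset L$ makes the subfamily defining $G(n,L)$ a subfamily of the one defining $G(n,K)$. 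Hence $Y=bG\setminus G$ is locally a $CSS$-space, and since $\pi$-character of $Y$ is countable, $G$ and $bG$ are separable and metrizable by Theorem~\ref{t1}.
\end{proof}
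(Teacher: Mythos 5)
Your overall route is exactly the paper's: the paper's proof is the single line ``by \cite[Lemma 3.1]{BH}, every $\sigma^{\sharp}$-space is a $CSS$-space, hence apply Theorem~\ref{t1}.'' So the reduction is correct and you would be safe simply citing that lemma. The problem is the explicit construction you offer in place of the citation: it does not work. With $G(n,K)=Z\setminus\bigcup\{P\in\bigcup_{i\leq n}\mathcal{P}_{i}:P\cap K=\emptyset\}$, showing $\bigcap_{n}G(n,K)=K$ requires, for each $p\notin K$, a \emph{single} member $P$ of the $p$-network with $p\in P$ and $P\cap K=\emptyset$. The $p$-network property only gives, for each $y\in K$, some $P_{y}$ with $p\in P_{y}\subset Z\setminus\{y\}$; compactness of $K$ then yields a \emph{finite intersection} $P_{y_{1}}\cap\dots\cap P_{y_{k}}$ containing $p$ and missing $K$, but no single member need miss $K$. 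Concretely, take $Z=[0,1]\cup\{2\}$ with the usual topology, $K=[0,1]$, $p=2$, and the countable (hence $\sigma$-closure-preserving) closed $p$-network consisting of all closed rational subintervals of $[0,1]$ together with the sets $\{2\}\cup([0,1]\setminus(a,b))$ for rationals $0\leq a<b\leq 1$: every member meets $K$, so your $G(n,K)$ equals $Z$ for all $n$.

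The repair is the standard $g$-function argument: assuming $\mathcal{P}_{n}\subset\mathcal{P}_{n+1}$, set $g(n,x)=Z\setminus\bigcup\{P\in\mathcal{P}_{n}:x\notin P\}$ (open by closure-preservation) and $G(n,K)=\bigcup\{g(n,x):x\in K\}$. Monotonicity in $n$ and in $K$ is immediate, and for $p\notin K$ one applies the $p$-network property to get, for each $x\in K$, some $P_{x}\in\mathcal{P}_{n_{x}}$ with $p\in P_{x}$ and $x\notin P_{x}$; the open sets $Z\setminus P_{x}$ cover $K$, a finite subcover gives an $n$ with $p\notin g(n,x)$ for every $x\in K$, whence $p\notin G(n,K)$. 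With that (or simply with the citation of \cite[Lemma 3.1]{BH}, which is what the paper does), the rest of your argument --- $Y$ is locally a $CSS$-space, $\pi$-character of $Y$ is countable, apply Theorem~\ref{t1} --- is exactly the paper's proof.
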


\begin{proof}
By \cite[Lemma 3.1]{BH}, it follows that every
$\sigma^{\sharp}$-space is a $CSS$-space. Hence $G$ and $bG$ are
separable and metrizable by Theorem ~\ref{t1}.
\end{proof}

\begin{question}
Let $G$ be a non-locally compact topological group. If
$Y=bG\setminus G$ satisfies the following conditions (1) and (2),
are $G$ and $bG$ separable and metrizable?
\begin{enumerate}
\item For each point $y\in Y$, there exists an open neighborhood $U(y)$ of
$y$ such that every countably compact subset of $U(y)$ is metirzable
and $G_{\delta}$-subset of $U(y)$;

\item $\pi$-character of $Y$ is countable .
\end{enumerate}
\end{question}

 \vskip 0.5cm
\section{Remainders that are locally quasi-$G_{\delta}$-diagonals, and that are unions}
First, we study the remainders with locally a
quasi-$G_{\delta}$-diagonal\footnote{A space $X$ has a {\it
quasi-$G_{\delta}$-diagonal} \cite{HR} if there exists a sequence
$\{\mathcal {G}_{n}\}_{n}$ of families of open subsets of $X$ such
that for each point $x\in X$, $\{\mbox{st}(x, \mathcal {G}_{n}):n\in
\mathbb{N}, \mbox{st}(x, \mathcal {G}_{n})\not=\emptyset\}$ is a
$p$-network at point $x$.} and improve a result of C. Liu.

We call a space $X$ is {\it Ohio complete} \cite{A} if in each
compactification $bX$ of $X$ there is a $G_{\delta}$-subset $Z$ such
that $X\subset Z$ and each point $y\in Z\setminus X$ is separated
from $X$ by a $G_{\delta}$-subset of $Z$.

\begin{lemma}\label{15}
Let $X$ be a $p$-space and every compact subset of $bX\setminus X$
be metrizalbe. Then there exists a $G_{\delta}$-subset $Y$ of $bX$
such that $X\subset Y$ and satisfies the following conditions:

\begin{enumerate}
\item $bX$ is first countable at every point $y\in Y\setminus X$;

\item If $X$ is a topological group and $\overline{Y\setminus X}\cap X\neq\emptyset$, then $X$ is
metrizable.
\end{enumerate}
\end{lemma}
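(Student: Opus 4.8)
The plan is to build the $G_\delta$-subset $Y$ using the $p$-space structure of $X$ together with metrizability of compact subsets of the remainder, and then analyze the boundary behaviour of $Y\setminus X$ when $X$ is a group. Since $X$ is a $p$-space, fix in $bX$ a sequence $\{\gamma_n\}_n$ of families of open subsets of $bX$ such that $\bigcap_n \mathrm{st}(x,\gamma_n)\subset X$ for every $x\in X$ and $\{\mathrm{st}(x,\gamma_n):n\in\mathbb N\}$ is a ``feathering'' of $X$ (the standard characterization of $p$-spaces via a sequence of open covers of $X$ in $bX$ whose stars shrink into $X$). Put $Y=\bigcap_n U_n$, where $U_n=\bigcup\gamma_n$, so $Y$ is a $G_\delta$-subset of $bX$ with $X\subset Y$. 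I would first check, by a routine argument, that $Y\setminus X$ is a $p$-space and that every compact subset of $Y\setminus X$, being compact in $bX\setminus X$, is metrizable; then since $Y\setminus X$ is additionally Lindel\"of (as a remainder of the $p$-space $X$, via Henriksen--Isbell and Lemma~\ref{l0}), it is in fact separable metrizable.

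For condition (1): I claim $bX$ is first countable at each $y\in Y\setminus X$. The key observation is that for such a $y$, the decreasing sequence of open sets $\mathrm{st}(y,\gamma_n)$ (intersected with a fixed neighbourhood base coming from the separable metrizable structure on a neighbourhood of $y$ in $Y\setminus X$, closed up in $bX$) forms, together with the metrizability of the compact set $\{y\}\cup(\text{limit points})$, a countable outer base; more precisely, since every compact subset of $bX\setminus X$ is metrizable and $p$-spaces have the property that the stars $\mathrm{st}(y,\gamma_n)$ trap the point, one combines these to extract a countable base at $y$ in $bX$. This is essentially the argument that in a $p$-space the remainder points at which the feathering ``localizes'' are points of first countability of $bX$; I would cite or reprove the standard fact that if $Y$ is separable metrizable and $Y$ is a $G_\delta$ in $bX$ then $bX$ is first countable at the $G_\delta$-points, which here applies since $Y\setminus X$ is separable metrizable and each of its points is a $G_\delta$-point of $bX$.

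For condition (2): assume $X$ is a topological group and there is a point $z\in \overline{Y\setminus X}\cap X$. By homogeneity of $X$ it suffices to produce \emph{one} point of $X$ at which $bX$ is first countable, or equivalently at which $X$ has countable character, since a topological group with a point of countable character is first countable and hence metrizable. Take a sequence $\{y_k\}\subset Y\setminus X$ converging to $z$ (or use the closure directly). By condition (1), $bX$ is first countable at each $y_k$; I would then argue that $z$ inherits countable character in $bX$: choose for each $k$ a countable outer base $\{V_{k,m}\}_m$ at $y_k$ in $bX$, and show that the countable family $\{V_{k,m}\}_{k,m}$, suitably sifted, contains a countable $\pi$-base or neighbourhood base at $z$ in $bX$ — this uses that $z$ is in the closure of a set whose points are all first-countable, plus regularity of $bX$ to separate $z$ from the complement of a given neighbourhood by finitely many of the $V_{k,m}$'s. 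Once $X$ has a point of countable character, $X$ is metrizable. The main obstacle I expect is exactly this transfer-of-character step in (2): moving first countability of $bX$ from the remainder points $y_k$ to the group point $z$ is not automatic in a general compactification, and the argument must genuinely exploit that $X$ is a topological group (so that $z$ having any reasonable ``small'' local structure propagates globally and upgrades to metrizability) rather than merely a regular space; getting the precise sifting of the countable families $\{V_{k,m}\}$ into a genuine base at $z$, as opposed to just a $\pi$-base, is where the care is needed.
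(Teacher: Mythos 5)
Your overall shape is right (produce a $G_\delta$-set $Y\supset X$ whose extra points lie in small pieces of the remainder, then transfer countability of character to a point of $X$), but both halves have genuine gaps. For the construction and for (1): the paper does not build $Y$ from the feathering directly; it invokes the fact that every $p$-space is Ohio complete, which gives a $G_\delta$-set $Y\supset X$ such that each $y\in Y\setminus X$ is separated from $X$ by a $G_\delta$-set, and hence lies in a \emph{compact} $G_\delta$-subset $C$ of $bX$ with $C\subset bX\setminus X$; then $C$ is metrizable by hypothesis, so $\{y\}$ is $G_\delta$ in $C$, hence $G_\delta$ in $bX$, hence a point of first countability of the compact space $bX$. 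Your $Y=\bigcap_n\bigcup\gamma_n$ only yields that $\bigcap_n\mathrm{st}(y,\gamma_n)$ is a $G_\delta$ missing $X$; this set need not be compact, so the hypothesis that compact subsets of the remainder are metrizable never gets a foothold, and the claim that each $y\in Y\setminus X$ is a $G_\delta$-point of $bX$ is exactly what remains to be proved. Your auxiliary claim that $Y\setminus X$ is separable metrizable is also unjustified (Lindel\"of plus ``all compact subsets metrizable'' does not give metrizability) and would not help anyway, since $\{y\}$ being $G_\delta$ in $Y\setminus X$ does not make it $G_\delta$ in $Y$ unless $y$ can be separated from $X$ inside $Y$ --- again the Ohio-completeness property.

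For (2), the step you flag as the obstacle is indeed where your argument stops and the paper's does not. Two points of difference: first, you cannot simply take a sequence $y_k\in Y\setminus X$ ``converging to $z$''; the paper instead uses the $p$-space property to find a compact $F\ni a$ with a countable base of neighborhoods $\{U_n\}$ in $bX$, picks $b_n\in U_n\cap(Y\setminus X)$, and obtains a cluster point $c\in F$ of $\{b_n\}$ --- note $c$ need not be $a$, and homogeneity of the group is what makes one good point enough. Second, the paper does not attempt to sift the countable bases $\eta_n$ at the $b_n$ into a neighborhood base at $c$; it only observes that $\bigcup_n\eta_n$ is a countable $\pi$-base of $bX$ at $c$, hence of $X$ at $c$ by density, and then uses the fact that a topological group of countable $\pi$-character is metrizable. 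Aiming for a full base at $z$, as you do, is both harder and unnecessary; settling for the $\pi$-base is the idea that closes the argument.
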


\begin{proof}
Since $X$ is a $p$-space, $X$ is Ohio complete \cite[Corollary
3.7]{A}. It follows that there is a $G_{\delta}$-subset $Y$ of $bX$
such that $X\subset Y$ and every point $y\in Y\setminus X$ can be
separated from $X$ by a $G_{\delta}$-subset. We now prove that $Y$
satisfies the conditions (1) and (2).

(1) From the choice of $Y$, it is easy to see that for every point
$y\in Y\setminus X$ there exists a compact $G_{\delta}$-subset $C$
of $bX$ such that $y\in C\subset Y\setminus X\subset bX\setminus X$.
Since $C$ is compact, the compact subset $C$ is metrizable. Therefore, $y$ is a
$G_{\delta}$-point in $bX$ and hence, $bX$ is first countable at
$y$.

(2) We choose a point $a\in \overline{Y\setminus X}\cap X$. Since
$X$ is a $p$-space, there exists a compact subset $F$ of $X$ such
that $a\in F$ and $F$ has a countable base of neighborhoods in $X$. Since $X$ is dense in $bX$, the set $F$ has a countable base of open
neighborhoods $\phi =\{U_{n}: n\in \omega\}$ in $bX$. Since $a\in
\overline{Y\setminus X}$, we can fix a $b_{n}\in U_{n}\cap
(Y\setminus X)$ for each $n\in \omega$. Obviously, there is a point
$c\in F$ which is a limit point for the sequence $\{b_{n}\}$. By
(1), we know that $bX$ is first countable at $b_{n}$ for every $n\in
\omega$. We can fix a countable base $\eta_{n}$ of $bX$ at $b_{n}$.
Then $\cup\{\eta_{n}:n\in \omega\}$ is a countable $\pi$-base of
$bX$ at $c$. Then the space $X$ also has a countable $\pi$-base at $c$,
since $c\in X$ and $X$ is dense in $bX$. Since $X$ is a topological
group, the space $X$ is metrizable.
\end{proof}

\begin{theorem}\label{t2}
Let $G$ be a non-locally compact topological group. If
$Y=bG\setminus G$ has a quasi-$G_{\delta}$-diagonal. Then $G$ and
$bG$ are separable and metrizable.
\end{theorem}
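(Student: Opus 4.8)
The plan is to follow the same architecture used in the proof of Theorem~\ref{t0}, extracting from the quasi-$G_{\delta}$-diagonal hypothesis the two structural facts we need: that countably compact subsets of $Y$ are compact and metrizable, and that $Y$ is Lindel\"of so that the fibrewise decomposition argument goes through. First I would recall that a space with a quasi-$G_{\delta}$-diagonal has the property that every countably compact subspace has a $G_{\delta}$-diagonal, hence (being countably compact with a $G_{\delta}$-diagonal) is compact and metrizable; this is the analogue of Lemma~\ref{l1}. Feeding this into Lemma~\ref{l6} gives that $G$ is locally separable and metrizable, so $G$ is a $p$-space, and then Henriksen--Isbell's theorem makes $Y=bG\setminus G$ Lindel\"of. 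Note that here, unlike in Theorem~\ref{t0}, the hypothesis is already global (``$Y$ has a quasi-$G_{\delta}$-diagonal'', not ``locally''), so no gluing lemma of the type of Lemma~\ref{l2} or Lemma~\ref{l9} is needed.

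Next I would run the decomposition $G=\oplus_{\alpha\in\wedge}G_{\alpha}$ into separable metrizable pieces, let $F\subset bG\setminus G$ be the set of points where $\zeta=\{G_{\alpha}\}$ fails to be locally finite, and observe as before that $F$ is compact; by the countably-compact-implies-metrizable fact, $F$ is metrizable and hence has a countable network. On $M=Y\setminus F$, each point $y$ has a neighbourhood $O_{y}$ in $bG$ whose closure misses $F$, so $\overline{O_{y}}$ meets only finitely many $G_{\alpha}$; putting $L$ equal to the union of those finitely many pieces, $L$ is separable metrizable, $\overline{L}\setminus L$ is a Lindel\"of $p$-space by Lemma~\ref{l0}, and $\overline{L}\setminus L\subset Y$ inherits a quasi-$G_{\delta}$-diagonal. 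A Lindel\"of (hence paracompact, hence submetacompact) space with a quasi-$G_{\delta}$-diagonal has in fact a $G_{\delta}$-diagonal, and a Lindel\"of $p$-space with a $G_{\delta}$-diagonal is separable and metrizable; so $\overline{L}\setminus L$ is separable metrizable, which forces $\overline{L}$ to have a countable network and thus be separable metrizable. Hence $O_{y}\cap M$ is separable metrizable, $M$ is locally separable metrizable, and since $F$ is compact with countable network and $Y$ is Lindel\"of, $M$ is Lindel\"of, hence separable, hence has a countable network. Therefore $Y$ has a countable network, so $Y$ has a $G_{\delta}$-diagonal, and finally $G$ and $bG$ are separable and metrizable by \cite[Theorem 5]{A1}.

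The main obstacle I expect is the implication ``Lindel\"of (or paracompact submetacompact) $+$ quasi-$G_{\delta}$-diagonal $\Rightarrow$ $G_{\delta}$-diagonal,'' which replaces the clean use of point-countable $p$-metabases in Theorem~\ref{t0}; I would want to cite it from the Gruenhage survey \cite{Gr} or from \cite{HR}, verifying that the hypotheses are met (a Lindel\"of space is paracompact and submetacompact, and the st$(x,\mathcal G_n)$-network condition upgrades to a diagonal sequence under submetacompactness). The other delicate point is the very last step: knowing $Y$ has a countable network does give a $G_{\delta}$-diagonal (a space with a countable network has a $G_{\delta}$-diagonal), and then \cite[Theorem 5]{A1} applies to a remainder with a $G_{\delta}$-diagonal. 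The rest is bookkeeping identical to Theorem~\ref{t0}, and in fact I would simply write ``in view of the proof of Theorem~\ref{t0}'' for those portions. Alternatively, one might shortcut the decomposition argument by invoking Lemma~\ref{15}: since $G$ is a $p$-space with all compact subsets of $bG\setminus G$ metrizable, there is a $G_{\delta}$-subset of $bG$ on which $bG$ is first countable off $G$, and either $\overline{Y\setminus X}\cap X=\emptyset$ (so $Y$ is itself a $G_{\delta}$ in $bG$ near its points, pushing toward first countability of $Y$) or $G$ is metrizable outright; I would keep this in reserve in case the network-theoretic route needs propping up.
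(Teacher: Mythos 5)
There is a genuine gap, and it sits at the very first step. You feed ``countably compact subsets of $Y$ are compact and metrizable'' into Lemma~\ref{l6} to conclude that $G$ is locally separable and metrizable, but Lemma~\ref{l6} has a second hypothesis: the $\pi$-character of $Y$ must be countable. A quasi-$G_{\delta}$-diagonal gives countable \emph{pseudo}character, not countable $\pi$-character (a countable dense subgroup of $\{0,1\}^{\mathfrak c}$ has a $G_{\delta}$-diagonal but uncountable $\pi$-character at every point), so nothing in your hypotheses licenses that application. This is exactly the point the paper is careful about: its proof first invokes \cite[Theorem 5.1]{A2} to get the dichotomy ``$G$ is a paracompact $p$-space or $Y$ is first countable.'' In the first-countable case $\pi\chi(Y)\leq\omega$ holds and your argument (which is essentially the paper's Case 1, run through the machinery of Theorem~\ref{t0} together with \cite[Corollary 3.6]{HR}) goes through. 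But you have no argument at all for the other horn, where $Y$ need not be first countable; the paper handles it with a separate and substantial analysis (Ohio completeness of the paracompact $p$-space $G$, the $G_{\delta}$-set $X\supset G$ and $M=X\setminus G$ from Lemma~\ref{15}, and the three subcases $\overline{M}\cap G=\emptyset$, $G$ locally separable, and $G$ nowhere locally separable, using $\sigma$-compactness, Souslin numbers, and a $\sigma$-discrete base). Your closing remark about ``keeping Lemma~\ref{15} in reserve'' gestures at this, but it presupposes that $G$ is already known to be a $p$-space, which in your main line comes only from the flawed use of Lemma~\ref{l6}; so it does not close the gap.

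Two smaller points. First, your claim that every countably compact subspace of a space with a quasi-$G_{\delta}$-diagonal ``has a $G_{\delta}$-diagonal'' is wrong as stated: subspaces inherit only a quasi-$G_{\delta}$-diagonal. The conclusion you want (such subspaces are compact, metrizable and $G_{\delta}$) is nevertheless available directly from \cite[Proposition 2.3]{BH}, which is what the paper cites. Second, you do not need the dubious implication ``Lindel\"{o}f $+$ quasi-$G_{\delta}$-diagonal $\Rightarrow$ $G_{\delta}$-diagonal'' that you flag as your main obstacle: the set $\overline{L}\setminus L$ you apply it to is a Lindel\"{o}f $p$-space, and \cite[Corollary 3.6]{HR} (a Lindel\"{o}f $p$-space with a quasi-$G_{\delta}$-diagonal is metrizable) does the job directly, as in the paper.
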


\begin{proof}
Obviously, $Y$ has a countable pseudocharacter. By \cite[Theorem
5.1]{A2}, $G$ is a paracompact $p$-space or $Y$ is first countable.

Case 1: The space $Y$ is first countable.

From \cite[Proposition 2.3]{BH} it follows that each countably
compact subset of $Y$ is a compact, metrizable, $G_{\delta}$-subset
of $Y$. Note that a Lindel\"{o}f $p$-space with a
quasi-$G_{\delta}$-diagonal is metrizable by \cite[Corollary
3.6]{HR}. In view of the proof of Theorem~\ref{t0}, it is easy to
see that $G$ and $bG$ are separable and metrizable.

Case 2: The space $G$ is a paracompact $p$-space.

By \cite[Corollary 3.7]{A}, $G$ is Ohio complete. Therefore, there
exists a $G_{\delta}$-subset $X$ of $bG$ such that $G\subset X$ and
every point $x\in X\setminus G$ can be separated from $G$ by a
$G_{\delta}$-set of $X$. Let $M=X\setminus G$. Then $bG$ is first
countable at every point $y\in M$ by Lemma~\ref{15}.

Subcase 1: $\overline{M}\cap G=\emptyset$. Then $X\setminus
\overline{M}=G$. Hence $G$ is a $G_{\delta}$-subset of $bG$. It
follows that $Y$ is $\sigma$-compact. Since $Y$ has a
quasi-$G_{\delta}$-diagonal, every compact subspace of $Y$ is
separable and metrizable by \cite[Proposition 2.3]{BH}. Hence $Y$ is
separable. Since both $Y$ and $G$ are dense in $bG$, it follows that
the souslin number of $G$ is countable. The space $G$ is Lindel\"{o}f, since
$G$ is paracompact. Therefore, $G$ is a Lindel\"{o}f $p$-space. Then
$Y$ is a Lindel\"{o}f $p$-space by Lemma~\ref{l0}. Since $Y$ has a
quasi-$G_{\delta}$-diagonal, the space $Y$ is metrizable by \cite[Corollary
3.6]{HR}. It follows that $Y$ has a $G_{\delta}$-diagonal.
Therefore, $G$ and $bG$ are separable and metrizable by
\cite[Theorem 5]{A1}.

Subcase 2: $\overline{M}\cap G\neq\emptyset$. Then $G$ is metrizable
by Lemma~\ref{15}.

Subcase 2(a): $G$ is locally separable. By \cite[Proposition
2.3]{BH}, it is easy to see that $G$ and $bG$ are separable and
metrizable by the proof of Theorem ~\ref{t0}.

Subcase 2(b): $G$ is nowhere locally separable. Fix a base
$\mathcal{B}=\cup\{\mathcal{U}_{n}:n\in \mathbb{N}\}$ of $G$ such
that each $\mathcal{U}_{n}$ is discrete in $G$. Let $F_{n}$ be the
set of all accumulation points for $\mathcal{U}_{n}$ in $bG$ for
each $n\in \mathbb{N}$. Put $Z=\cup\{F_{n}:n\in \mathbb{N}\}$. Then
$Z$ is dense in $Y$ and $\sigma$-compact by \cite[Proposition
4]{A1}. Since every compact space with a quasi-$G_{\delta}$-diagonal
is separable and metrizable by \cite[Proposition 2.3]{BH}, the space $Z$ has a
countable network. Because $G$ is nowhere locally compact, the space $Y$ is
dense in $bG$. It follows that $Z$ is dense in $bG$. Hence $bG$ is
separable, which implies that the Souslin number of $G$ is
countable. Since $G$ is metrizable, the space $G$ is separable. Then $Y$ is a
Lindel\"{o}f $p$-space by Lemma~\ref{l0}. Hence $Y$ is metrizable by
\cite[Corollary 3.6]{HR}. It follows that $Y$ is separable and
metrizable, which implies that $G$ and $bG$ are separable and
metrizable.
\end{proof}

\begin{lemma}\label{l3}
Let $X$ be a Lindel\"{o}f space with locally a
quasi-$G_{\delta}$-diagonal. Then $X$ has a
quasi-$G_{\delta}$-diagonal.
\end{lemma}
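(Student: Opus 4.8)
The plan is to mimic the gluing arguments of Lemmas~\ref{l2} and \ref{l9}. For each $x\in X$ fix an open neighbourhood $U(x)$ of $x$ carrying a quasi-$G_{\delta}$-diagonal, witnessed by a sequence $\{\mathcal{G}_{n}(x)\}_{n}$ of families of open subsets of $U(x)$; since $U(x)$ is open in $X$, every member of every $\mathcal{G}_{n}(x)$ is open in $X$ as well. Using that $X$ is Lindel\"{o}f, extract a countable subcover $\{U(x_{i}):i\in\mathbb{N}\}$ of the open cover $\{U(x):x\in X\}$.

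Then I would form the countable collection consisting of all the families $\mathcal{G}_{n}(x_{i})$, $n,i\in\mathbb{N}$, together with the singleton families $\mathcal{H}_{i}:=\{U(x_{i})\}$, $i\in\mathbb{N}$, and re-enumerate it as a single sequence $\{\mathcal{G}_{m}\}_{m}$. The point is to check that this sequence is a quasi-$G_{\delta}$-diagonal sequence for $X$, i.e. that for each $y\in X$ the family $\{\mbox{st}(y,\mathcal{G}_{m}):m\in\mathbb{N},\ \mbox{st}(y,\mathcal{G}_{m})\neq\emptyset\}$ is a $p$-network at $y$ in $X$. So fix distinct $y,z\in X$ and choose $i$ with $y\in U(x_{i})$.

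If $z\notin U(x_{i})$, then $\mbox{st}(y,\mathcal{H}_{i})=U(x_{i})$ contains $y$ but not $z$, and we are done. If $z\in U(x_{i})$, then because $\{\mathcal{G}_{n}(x_{i})\}_{n}$ witnesses a quasi-$G_{\delta}$-diagonal of $U(x_{i})$, there is an $n$ with $y\in\mbox{st}_{U(x_{i})}(y,\mathcal{G}_{n}(x_{i}))$ and $z\notin\mbox{st}_{U(x_{i})}(y,\mathcal{G}_{n}(x_{i}))$; since every member of $\mathcal{G}_{n}(x_{i})$ is contained in $U(x_{i})$, the star computed in $U(x_{i})$ coincides with the star computed in $X$, so this family separates $y$ and $z$ inside $X$ as well. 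Hence $\{\mathcal{G}_{m}\}_{m}$ is a quasi-$G_{\delta}$-diagonal for $X$.

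The argument is essentially routine; the only point requiring a little care is the case of a pair of points lying in different members of the subcover, where no shrinking trick as in Lemma~\ref{l9} is available, which is exactly why the auxiliary singleton families $\mathcal{H}_{i}$ are thrown in (they also dispose of any degenerate $U(x_{i})$). One should also note that the identity $\mbox{st}_{U(x_{i})}(y,\mathcal{G}_{n}(x_{i}))=\mbox{st}_{X}(y,\mathcal{G}_{n}(x_{i}))$ is legitimate precisely because each $\mathcal{G}_{n}(x_{i})$ consists of subsets of $U(x_{i})$.
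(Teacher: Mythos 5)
Your proof is correct and follows essentially the same route as the paper's: take a countable subcover by Lindel\"{o}fness, merge the countably many local quasi-$G_{\delta}$-diagonal sequences into one countable sequence, and then separate a pair of points according to whether the second point lies in the chosen $U(x_{i})$. The only difference is that you add the singleton families $\{U(x_{i})\}$ to handle the case $z\notin U(x_{i})$, whereas the paper instead picks a $k$ with $x\in\cup\mathcal{U}_{nk}$ and uses $\mbox{st}(x,\mathcal{U}_{nk})\subset\cup\mathcal{U}_{nk}\subset U_{n}\subset X\setminus\{z\}$; your variant is marginally more robust, since it also covers the degenerate situation where some $U(x_{i})$ is an open singleton and the local families need not cover it.
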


\begin{proof}
For each point $x\in X$, there exists an open neighborhood $U(x)$
such that $x\in U(x)$ and $U(x)$ has a quasi-$G_{\delta}$-diagonal.
Then $\mathcal{U}=\{U(x):x\in X\}$ is an open cover of $X$. Since
$X$ is a Lindel\"{o}f space, there exists a countable subfamily
$\mathcal{V}\subset\mathcal{U}$ such that $X=\cup\mathcal{V}$.
Denoted $\mathcal{V}$ by $\{U_{n}:n\in \mathbb{N}\}$. For each $n\in
\mathbb{N}$, let $\{\mathcal{U}_{nk}\}_{k\in \mathbb{N}}$ be a
quasi-$G_{\delta}$-diagonal sequence of $U_{n}$. Let
$\mathcal{F}=\{\mathcal{U}_{nk}\}_{n, k\in \mathbb{N}}$. Then
$\mathcal{F}$ is a quasi-$G_{\delta}$-diagonal sequence of $X$.

Indeed, for distinct points $x, y\in X$, there exists an $n\in
\mathbb{N}$ such that $x\in U_{n}$.

If $y\not\in U_{n}$, then $x\in U_{n}\subset X-\{y\}$. Since
$\{\mathcal{U}_{nk}\}_{k\in \mathbb{N}}$ is a
quasi-$G_{\delta}$-diagonal sequence of $U_{n}$, there exists a
$k\in \mathbb{N}$ such that $x\in \cup\mathcal{U}_{nk}$. Hence
$x\in\mbox{st}(x,
\mathcal{U}_{nk})\subset\cup\mathcal{U}_{nk}\subset U_{n}\subset
X-\{y\}$.

If $y\in U_{n}$, then $x\in U_{n}-\{y\}\subset X-\{y\}$. Since
$\{\mathcal{U}_{nk}\}_{k\in \mathbb{N}}$ is a
quasi-$G_{\delta}$-diagonal sequence of $U_{n}$, there exists a
$k\in \mathbb{N}$ such that $x\in \mbox{st}(x,
\mathcal{U}_{nk})\subset U_{n}-\{y\}\subset X-\{y\}$.

Therefore, $\mathcal{F}$ is a quasi-$G_{\delta}$-diagonal sequence
of $X$.
\end{proof}

\begin{theorem}\label{t3}
Let $G$ be a non-locally compact topological group. If
$Y=bG\setminus G$ has locally a quasi-$G_{\delta}$-diagonal, then
$G$ and $bG$ are separable and metrizable.
\end{theorem}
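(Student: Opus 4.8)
The plan is to reduce \textbf{Theorem~\ref{t3}} to \textbf{Theorem~\ref{t2}} by the same ``Lindel\"of plus pass-to-the-remainder'' mechanism that was used for Theorem~\ref{t0} and for the $\delta\theta$-base theorem. The first step is to observe that every countably compact subset of $Y$ is metrizable: indeed, by hypothesis each point of $Y$ has an open neighbourhood $U(y)$ with a quasi-$G_{\delta}$-diagonal, and a countably compact subset of $U(y)$ is separable and metrizable by \cite[Proposition 2.3]{BH}. Since $\pi$-character considerations are not needed here --- unlike Theorem~\ref{t0}, a quasi-$G_{\delta}$-diagonal already forces countable pseudocharacter --- I would instead invoke Lemma~\ref{l6} only after first checking local separability and metrizability of $G$. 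Concretely: metrizability of countably compact subsets of the neighbourhoods $U(y)$ plus Lemma~\ref{l6} give that $G$ is locally separable and metrizable, hence a $p$-space, hence $Y=bG\setminus G$ is Lindel\"of by the Henriksen--Isbell theorem.

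Once $Y$ is Lindel\"of, Lemma~\ref{l3} applies directly: a Lindel\"of space with locally a quasi-$G_{\delta}$-diagonal has a (global) quasi-$G_{\delta}$-diagonal. So $Y$ itself has a quasi-$G_{\delta}$-diagonal. At this point the hypotheses of \textbf{Theorem~\ref{t2}} are exactly met: $G$ is a non-locally compact topological group and its remainder $Y=bG\setminus G$ has a quasi-$G_{\delta}$-diagonal. Applying Theorem~\ref{t2} yields that $G$ and $bG$ are separable and metrizable, which is the desired conclusion.

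I expect the only point needing any care is the very first reduction, namely verifying that ``countably compact subsets of $U(y)$ are metrizable'' is genuinely available so that Lemma~\ref{l6} can be invoked to get $G$ locally separable and metrizable. This is where the proof of Theorem~\ref{t2} cannot be used as a black box, because Theorem~\ref{t2} assumes a global quasi-$G_{\delta}$-diagonal on $Y$, whereas here we only have it locally --- so we must first bootstrap to Lindel\"ofness of $Y$ before we are allowed to globalize via Lemma~\ref{l3}. The logical order is therefore essential: (local quasi-$G_\delta$-diagonal) $\Rightarrow$ (countably compact subsets of neighbourhoods metrizable) $\Rightarrow$ (via Lemma~\ref{l6}) $G$ locally separable metrizable $\Rightarrow$ $G$ a $p$-space $\Rightarrow$ (Henriksen--Isbell) $Y$ Lindel\"of $\Rightarrow$ (Lemma~\ref{l3}) $Y$ has a quasi-$G_{\delta}$-diagonal $\Rightarrow$ (Theorem~\ref{t2}) $G$ and $bG$ separable and metrizable. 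No step beyond the first is an obstacle; everything after the globalization is a citation.

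Here is the proof.

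\begin{proof}
For each point $y\in Y$ fix an open neighbourhood $U(y)$ of $y$ in $Y$ having a quasi-$G_{\delta}$-diagonal. By \cite[Proposition 2.3]{BH}, every countably compact subset of $U(y)$ is a compact, metrizable, $G_{\delta}$-subset of $U(y)$; in particular every countably compact subset of $U(y)$ is metrizable. It then follows from Lemma~\ref{l6} that $G$ is locally separable and metrizable, hence $G$ is a $p$-space. By the Henriksen--Isbell theorem, $Y=bG\setminus G$ is Lindel\"of. Since $Y$ is Lindel\"of and has locally a quasi-$G_{\delta}$-diagonal, Lemma~\ref{l3} shows that $Y$ has a quasi-$G_{\delta}$-diagonal. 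Now $G$ is a non-locally compact topological group whose remainder $Y=bG\setminus G$ has a quasi-$G_{\delta}$-diagonal, so by Theorem~\ref{t2} the spaces $G$ and $bG$ are separable and metrizable.
\end{proof}
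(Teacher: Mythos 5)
Your proof is correct and follows essentially the same route as the paper's: metrizability of countably compact subsets of the neighbourhoods $U(y)$ together with Lemma~\ref{l6} gives that $G$ is locally separable and metrizable, Henriksen--Isbell gives Lindel\"ofness of $Y$, Lemma~\ref{l3} globalizes the quasi-$G_{\delta}$-diagonal, and Theorem~\ref{t2} finishes. The one caveat --- shared with the paper's own proof, which cites \cite[Propositions 2.1 and 2.5]{BH} where you cite \cite[Proposition 2.3]{BH} --- is that Lemma~\ref{l6} as stated also assumes countable $\pi$-character of $Y$, which neither you nor the paper verifies here; your parenthetical claim that this is ``not needed'' because a quasi-$G_{\delta}$-diagonal forces countable pseudocharacter conflates pseudocharacter with $\pi$-character.
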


\begin{proof}
By \cite[Proposition 2.1 and 2.5]{BH} and Lemma~\ref{l6}, it is easy
to see that $G$ is locally a separable and metrizable space. Then
$Y$ is a Lindel\"{o}f space by Henriksen and Isbell's theorem. From
Lemma ~\ref{l3} it follows that $Y$ has a
quasi-$G_{\delta}$-diagonal. Then $G$ and $bG$ are separable and
metrizable by Theorem ~\ref{t2}.
\end{proof}

\begin{question}\label{q0}
Is there a topological group $G$ such that the $Y=bG\setminus G$ has a
$W_{\delta}$-diagonal\footnote{A space $X$ is said to have a {\it $W_{\delta}$-diagonal} if there is a sequence ($\mathcal{B}_{n}$) of bases for $X$ such that whenever
$x\in B_{n}\in\mathcal{B}_{n}$, and $(B_{n})$ is decreasing (by set inclusion), then $\{x\}=\cap\{B_{n}: n\in\omega\}$.}, $G$ is not reparable and metrizable?
\end{question}

\begin{corollary}\cite{LC}
Let $G$ be a non-locally compact topological group. If
$Y=bG\setminus G$ has locally a $G_{\delta}$-diagonal, then $G$ and
$bG$ are separable and metrizable.
\end{corollary}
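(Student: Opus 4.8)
The plan is to reduce the ``locally'' version to the global version of the theorem, namely Theorem~\ref{t2}, which is already available. The overall strategy mirrors the proofs of the earlier ``locally'' results (Theorems~\ref{t0} and the $\delta\theta$-base theorem): first use the local hypothesis together with a countably-compact-implies-metrizable fact to get that $G$ is locally separable and metrizable; then invoke Henriksen--Isbell to make $Y$ Lindel\"{o}f; then upgrade the ``locally quasi-$G_\delta$-diagonal'' property of $Y$ to a genuine quasi-$G_\delta$-diagonal using Lindel\"{o}fness; and finally apply Theorem~\ref{t2}.

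The key steps, in order, would be: (1) Observe that if a space has a quasi-$G_\delta$-diagonal then every countably compact subset is a compact metrizable $G_\delta$-subset --- this is exactly what \cite[Propositions 2.1 and 2.5]{BH} provide (the same combination already used in Case~1 of Theorem~\ref{t2}). Applying this locally to $Y=bG\setminus G$ shows that for each $y\in Y$ there is an open $U(y)\ni y$ in which every countably compact set is metrizable. (2) By Lemma~\ref{l6} and the countable $\pi$-character of $Y$... wait: here there is a subtlety, since the present statement has \emph{no} hypothesis on $\pi$-character. But a space with a quasi-$G_\delta$-diagonal has countable pseudocharacter hence, being Hausdorff, the remainder $Y$ has points of countable $\pi$-character? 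No --- the correct route is that Lemma~\ref{l6} only needs $\pi$-character countable, which is not assumed. Instead one should note: a space with a (local) quasi-$G_\delta$-diagonal need not have countable $\pi$-character, so one must argue differently. The cleaner path is to invoke Lemma~\ref{l6} only after first observing that each $U(y)$ is first countable or has a $G_\delta$-diagonal-like property forcing countable $\pi$-character locally --- but actually a quasi-developable/quasi-$G_\delta$-diagonal space has a $\sigma$-point-finite... hmm.

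Let me restate the plan more carefully. First I would apply \cite[Propositions 2.1 and 2.5]{BH} locally to conclude that each point of $Y$ has an open neighbourhood in which every countably compact set is compact and metrizable. To get that $G$ is locally separable and metrizable via Lemma~\ref{l6}, I need the $\pi$-character of $Y$ to be countable; I would obtain this from the fact that a space with a quasi-$G_\delta$-diagonal (even locally) has, at each point, a countable local $\pi$-base: indeed $\{\mathrm{st}(x,\mathcal{G}_n): n\in\mathbb{N}\}$ refines a $p$-network at $x$, and by a standard argument one extracts from it a countable local $\pi$-base, so $\pi\chi(Y)\leq\omega$. With this in hand, Lemma~\ref{l6} gives that $G$ is locally separable and metrizable, hence a $p$-space, hence $Y$ is Lindel\"{o}f by Henriksen--Isbell. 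Then Lemma~\ref{l3} promotes the local quasi-$G_\delta$-diagonal to a global one, and Theorem~\ref{t2} finishes the argument. The main obstacle I expect is precisely the $\pi$-character bookkeeping: verifying rigorously that a (local) quasi-$G_\delta$-diagonal forces countable $\pi$-character of $Y$, so that Lemma~\ref{l6} legitimately applies; everything after that is a routine assembly of the cited results, exactly as in Theorem~\ref{t0}.
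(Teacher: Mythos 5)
There is a genuine gap, and it is exactly the one you flagged yourself and then papered over. Your plan hinges on feeding Lemma~\ref{l6} the hypothesis that $\pi\chi(Y)\leq\omega$, and you propose to get this from the claim that a (quasi-)$G_{\delta}$-diagonal forces countable $\pi$-character because the stars $\mathrm{st}(x,\mathcal{G}_{n})$ form a $p$-network at $x$. That claim is false: a countable open $p$-network at $x$ witnesses countable \emph{pseudocharacter} at $x$ (the point is a $G_{\delta}$), not countable $\pi$-character, and there is no ``standard argument'' extracting a local $\pi$-base from it. For instance, a countable dense subspace $D$ of $2^{\mathfrak{c}}$ is $T_{1}$ with $D^{2}\setminus\Delta$ countable, hence has a $G_{\delta}$-diagonal, yet $\pi\chi(D)\geq\pi w(D)/|D|\geq\mathfrak{c}$. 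So Lemma~\ref{l6} cannot be invoked the way you propose. Note that the paper's own escape from this difficulty in Theorem~\ref{t2} is not a $\pi$-character bound at all but the dichotomy of \cite[Theorem 5.1]{A2}: countable pseudocharacter of the remainder forces $G$ to be a paracompact $p$-space \emph{or} $Y$ to be first countable, and the paracompact $p$-space case is then handled by a separate (and substantially longer) argument via Ohio completeness. Your outline has no counterpart to that second branch.

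The larger miss is strategic: this corollary needs no new argument at all. A $G_{\delta}$-diagonal sequence consists of open covers, so every star is non-empty and $\bigcap_{n}\mathrm{st}(x,\mathcal{G}_{n})=\{x\}$ makes the stars a $p$-network at $x$; hence a ($G_{\delta}$)-diagonal is a quasi-$G_{\delta}$-diagonal, ``locally a $G_{\delta}$-diagonal'' implies ``locally a quasi-$G_{\delta}$-diagonal'', and the statement is an immediate special case of Theorem~\ref{t3}, which is stated and proved directly before it. What you have attempted is essentially a re-derivation of Theorem~\ref{t3} itself, and in that re-derivation the $\pi$-character step is not salvageable as written.
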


Next, we study the remainder that are the unions of the
$G_{\delta}$-diagonals.

\begin{lemma}\label{17}
Let $G$ be a non-locally compact topological group. If there exists
a point $a\in Y=bG\setminus G$ such that $\{a\}$ is a
$G_{\delta}$-set in $Y$, then at least one of the following
conditions holds:
\begin{enumerate}
\item $G$ is a paracompact $p$-space;

\item $Y$ is first-countable at some point.
\end{enumerate}
\end{lemma}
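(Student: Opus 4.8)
The plan is to exploit the hypothesis that some singleton $\{a\}$ is a $G_\delta$ in $Y=bG\setminus G$ together with the dichotomy for remainders of topological groups due to Arhangel'ski\v{\i}. Concretely, if $\{a\}=\bigcap_{n}V_n$ with each $V_n$ open in $Y$, then taking open sets $W_n$ of $bG$ with $W_n\cap Y=V_n$ and intersecting with a fixed open neighborhood, we see that $\{a\}$ is itself a $G_\delta$-subset of the compact space $bG$; equivalently $bG$ has countable pseudocharacter at $a$. The standard fact here is that a compact space with countable pseudocharacter at a point is first-countable at that point. Thus $bG$ is first-countable at $a$, and since the relevant neighborhoods of $a$ intersected with $Y$ form a countable base of $Y$ at $a$, we conclude that $Y$ is first-countable at $a$; this would already give alternative (2). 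However, the point $a$ is only assumed to have the $G_\delta$ property in $Y$, not in $bG$, so one must be slightly careful: the $G_\delta$-subsets $V_n$ of $Y$ need only be traces of $G_\delta$-subsets of $bG$, and $\{a\}=\bigcap_n \overline{W_n}$ need not hold — the closures may pick up points of $G$.

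To handle this properly I would follow the template of the Subcase analysis in the proof of Theorem~\ref{t2}. Fix $G_\delta$-subsets $Z_n$ of $bG$ with $Z_n\cap Y=V_n$ and set $Z=\bigcap_n Z_n$; then $Z$ is a $G_\delta$-subset of $bG$ containing $a$, and $Z\cap Y=\{a\}$, so $Z\subset \{a\}\cup G$. Now consider whether $a\in\overline{Z\setminus\{a\}}$ taken inside $bG$, i.e.\ whether $a$ is an accumulation point of $Z\cap G$. If it is \emph{not}, then $a$ is isolated in $Z$, hence $\{a\}$ is open in $Z$, hence $\{a\}$ is a $G_\delta$-subset of $bG$, so $bG$ is first-countable at $a$ and therefore $Y$ is first-countable at $a$, giving conclusion~(2). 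If $a$ \emph{is} an accumulation point of $Z\cap G$, then the $G_\delta$-subset $Z$ of $bG$ meets $G$ in a set having $a$ in its closure; one invokes the Ohio completeness machinery from Lemma~\ref{15} (since a topological group that is not locally compact is, if it is to be a $p$-space, covered by the $p$-space hypothesis — or more directly, one argues as in Subcase~2 of Theorem~\ref{t2}) to deduce that $G$ has a countable $\pi$-base at that limit point and hence, being a topological group, is metrizable; a metrizable non-locally compact topological group is a paracompact $p$-space, which is conclusion~(1).

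The cleanest route, and the one I would actually write, is to quote \cite[Theorem 5.1]{A2} directly, exactly as is done at the start of the proof of Theorem~\ref{t2}: the hypothesis that $\{a\}$ is a $G_\delta$ in $Y$ gives that $Y$ has a point of countable pseudocharacter, and Arhangel'ski\v{\i}'s theorem on remainders of topological groups asserts that then either $G$ is a paracompact $p$-space or $Y$ is first-countable. One should check that \cite[Theorem 5.1]{A2} is stated for ``countable pseudocharacter at a point'' rather than globally; if it requires the remainder to have countable pseudocharacter everywhere, then the argument of the previous paragraph is the substitute. The main obstacle is precisely this bookkeeping about whether the $G_\delta$-property of $\{a\}$ survives passage from $Y$ to $bG$; once the point $a$ is pinned down as a $G_\delta$-point of the compactum $bG$ (in the case it is not a limit point of $Z\cap G$) first-countability of $bG$ at $a$, and hence of $Y$ at $a$, is automatic, and the complementary case is exactly the metrizability argument already executed in Lemma~\ref{15} and Theorem~\ref{t2}.
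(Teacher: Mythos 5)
Your reduction to the two cases (whether or not $a$ is an accumulation point of $Z\cap G$, where $Z$ is a $G_\delta$-subset of $bG$ with $Z\cap Y=\{a\}$) has the right shape, and the first case is handled correctly: if $a$ is isolated in $Z$ then $\{a\}$ is a $G_\delta$-point of the compactum $bG$, hence $bG$, and therefore $Y$, is first countable at $a$. The genuine gap is in the second case. There you propose to ``invoke the Ohio completeness machinery from Lemma~\ref{15}'' or ``argue as in Subcase~2 of Theorem~\ref{t2}'' to get a countable $\pi$-base and metrizability of $G$. This is circular: Lemma~\ref{15} assumes $G$ is a $p$-space, and Subcase~2 of Theorem~\ref{t2} sits inside the case where $G$ is already known to be a paracompact $p$-space --- but conclusion~(1) of the present lemma is precisely that $G$ is a paracompact $p$-space, so you may not assume it. Moreover, the $\pi$-base argument of Lemma~\ref{15}(2) produces a countable $\pi$-base at a point of the \emph{group} that is a limit of points of the \emph{remainder} at which $bG$ is already known to be first countable; here the situation is reversed ($a\in Y$ is a limit of points of $G$, and you have no first-countability information at those points), so the argument does not transfer, and ``$G$ has a countable $\pi$-base at that limit point'' does not even refer to a point of $G$.

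What is actually needed in the second case --- and what the paper does --- is to produce a nonempty \emph{compact} subset of $G$ with a countable base of neighborhoods. Shrink your $Z=\bigcap_n W_n$ to a compact $G_\delta$-set $F=\bigcap_n\overline{V_n}$ with $a\in V_{n+1}\subset\overline{V_{n+1}}\subset V_n\cap W_n$, so that $F$ is compact, has a countable base of neighborhoods in $bG$, and satisfies $F\cap Y=\{a\}$. If $F=\{a\}$ one gets conclusion~(2) exactly as in your first case; otherwise $F\setminus\{a\}=F\cap G\neq\emptyset$, and separating $a$ from a point of $F\cap G$ by a cozero set yields a nonempty compact $B\subset G$ with a countable base of neighborhoods in $bG$, hence in $G$. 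By Roelcke--Dierolf \cite{RW}, a topological group containing such a compact set is of countable type and therefore a paracompact $p$-space, which is conclusion~(1). This passage through a compact subset of $G$ and the characterization of groups of countable type is the missing idea; your ``cleanest route'' via \cite[Theorem 5.1]{A2} is, as you yourself note, only available if that theorem applies to a single point of countable pseudocharacter, which is essentially what this lemma is being proved to supply.
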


\begin{proof}
Suppose that $Y$ is not first-countable at point $a$. Since $a$ is a
$G_{\delta}$-point in $Y$, there exists a compact subset $F\subset
bG$ with a countable base at $F$ in $bG$ such that
$\{a\}=F\cap (bG\setminus G)$. We have
$F\setminus\{a\}\neq\emptyset$, since $Y$ is not first-countable at
point $a$. Therefore, there exists a non-empty compact subset
$B\subset F$ with a countable base at $B$ in $bG$.
Obviously, $B\subset G$. It follows that $G$ is a topological group
of countable type \cite{RW}. Therefore, $G$ is a paracompact
$p$-space\cite{RW}.
\end{proof}

\begin{lemma}\label{18}
Let $G$ be a non-locally compact topological group, and
$Y=bG\setminus G=Y_{1}\cup Y_{2}$, where both $Y_{1}$ and $Y_{2}$
have a countable pseudocharacter. If at most one of the $Y_{1}$ and
$Y_{2}$ is dense in $bG$, then at least one of the following
conditions holds:
\begin{enumerate}
\item $G$ is a paracompact $p$-space;

\item $Y$ is first-countable at some point.
\end{enumerate}
\end{lemma}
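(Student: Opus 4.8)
The plan is to reduce this lemma to Lemma~\ref{17}: it suffices to exhibit a single point $a\in Y$ such that $\{a\}$ is a $G_{\delta}$-set in $Y$ (and not merely in $Y_1$ or in $Y_2$), for then Lemma~\ref{17} immediately yields one of conclusions (1) and (2). As a preliminary observation I would record that, since $G$ is a non-locally compact topological group, $G$ is nowhere locally compact by homogeneity, so that $Y=bG\setminus G$ is dense in $bG$.

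By hypothesis at most one of $Y_1$ and $Y_2$ is dense in $bG$, hence at least one of them is not; after relabelling we may assume $Y_2$ is not dense in $bG$. Put $W=bG\setminus\overline{Y_2}$. Then $W$ is a nonempty open subset of $bG$, so $W\cap Y\neq\emptyset$ because $Y$ is dense in $bG$; also $W\cap Y$ is open in $Y$, and since $W\cap Y_2=\emptyset$ the equality $Y=Y_1\cup Y_2$ forces $\emptyset\neq W\cap Y\subset Y_1$. Fix any $a\in W\cap Y$. Since $Y_1$ has countable pseudocharacter, there is a sequence $(U_n)$ of open subsets of $Y_1$ with $\{a\}=\bigcap_n U_n$; write $U_n=V_n\cap Y_1$ with $V_n$ open in $Y$. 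The purpose of $W$ is to discard the portion of $Y$ lying outside $Y_1$: I would check that $\{a\}=\bigcap_n\big(V_n\cap W\cap Y\big)$, where only the inclusion from right to left needs an argument, since any $z$ in the right-hand side lies in $W\cap Y\subset Y_1$, hence in $Y_1\cap\bigcap_n V_n=\{a\}$. As each $V_n\cap W\cap Y$ is open in $Y$, this presents $\{a\}$ as a $G_{\delta}$-set in $Y$, and Lemma~\ref{17} applies at the point $a$.

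I do not expect a genuine obstacle; the argument is essentially a matter of relocating a $G_{\delta}$-representation from a subspace into the whole space. The one step that requires care is the verification in the previous paragraph: the singleton $\{a\}$ is a $G_{\delta}$ in $Y_1$ by the pseudocharacter hypothesis, but to feed it to Lemma~\ref{17} one needs it to be a $G_{\delta}$ in $Y$, and this is exactly why one must localize to an open set $W$ missing $Y_2$ (which exists precisely because $Y_2$ is not dense). Everything else — density of $Y$ in $bG$ and the set-theoretic bookkeeping — is routine.
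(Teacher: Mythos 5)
Your proposal is correct and follows essentially the same route as the paper: take the open set $W=bG\setminus\overline{Y_2}$ complementary to the closure of the non-dense piece, observe that $W\cap Y$ is a nonempty open subset of $Y$ contained in $Y_1$ whose points are therefore $G_{\delta}$-points of $Y$, and invoke Lemma~\ref{17}. The only difference is that you spell out the density of $Y$ in $bG$ and the $G_{\delta}$-bookkeeping, which the paper leaves implicit.
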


\begin{proof}
Without loss of generality, we can assume that $\overline{Y_{1}}\neq
bG$. Let $U=bG\setminus\overline{Y_{1}}$. Then $V=U\cap Y=U\cap
Y_{2}\neq\emptyset$. It follows that $V$ is an open subset of $Y$
and each point of $V$ is a $G_{\delta}$-point. By Lemma ~\ref{17},
we complete the proof.
\end{proof}

\begin{theorem}\label{t5}
Let $G$ be a non-locally compact topological group, and
$Y=bG\setminus G=Y_{1}\cup Y_{2}$, where both $Y_{1}$ and $Y_{2}$
have a countable pseudocharacter. If both $Y_{1}$ and $Y_{2}$ are
Ohio complete, then at least one of the following conditions holds:
\begin{enumerate}
\item $G$ is a paracompact $p$-space;

\item $Y$ is first-countable at some point.
\end{enumerate}
\end{theorem}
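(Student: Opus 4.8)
The plan is to reduce Theorem~\ref{t5} to Lemma~\ref{18}. The point of Lemma~\ref{18} is that it already delivers the desired dichotomy whenever \emph{at most one} of $Y_{1}$, $Y_{2}$ is dense in $bG$, so the only case that needs new work is when \emph{both} $Y_{1}$ and $Y_{2}$ are dense in $bG$. I would dispose of this remaining case by exploiting Ohio completeness of $Y_{1}$ together with the fact that $Y_{1}$, having a countable pseudocharacter, behaves well inside a compactification. Concretely, pick a compactification of $Y_{1}$ --- the natural choice is $\overline{Y_{1}}$ taken in $bG$, which is a compactification of $Y_{1}$ since $Y_{1}$ is dense in $bG$, so $\overline{Y_{1}}=bG$. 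Ohio completeness of $Y_{1}$ then yields a $G_{\delta}$-subset $Z$ of $bG$ with $Y_{1}\subset Z$ such that each point of $Z\setminus Y_{1}$ is separated from $Y_{1}$ by a $G_{\delta}$-subset of $Z$.

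Now examine where $G$ sits relative to $Z$. Since $G$ is dense in $bG$ and $G\cap Y_{1}=\emptyset$, we have $G\cap Z\subset Z\setminus Y_{1}$, and $G\cap Z$ is dense in $Z$ (as $G$ is dense in $bG$ and $Z\supset Y_{1}$ is dense). If $G\cap Z\neq\emptyset$, fix $a\in G\cap Z$; then $a$ is separated from $Y_{1}$ by a $G_{\delta}$-subset of $Z$, hence $\{a\}$ is the intersection of that $G_{\delta}$-set with a suitable neighborhood, so $\{a\}$ is a $G_{\delta}$-subset of $Z$, and since $Z$ itself is $G_{\delta}$ in $bG$ it follows that $\{a\}$ is a $G_{\delta}$-point of $bG$, i.e. $\{a\}$ is a $G_{\delta}$-subset of $G$. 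This means $G$ has a point of countable pseudocharacter; being a topological group, $G$ then has countable pseudocharacter everywhere, and there is a compact subset of $bG$ with a countable base of neighborhoods meeting $bG\setminus G$ only in $\{a\}$-type pieces --- the argument of Lemma~\ref{17} applies verbatim with $a\in G$ in the role of the $G_{\delta}$-point, giving either that $G$ is a paracompact $p$-space (conclusion (1)) or that $Y$ is first countable at some point (conclusion (2)). If instead $G\cap Z=\emptyset$, then $Z\subset bG\setminus G=Y$, so $Y$ contains the $G_{\delta}$-subset $Z$ of $bG$ with $Y_{1}\subset Z$; then $Z\setminus Y_{1}\subset Y_{2}$ has a countable pseudocharacter too, and every point of $Z\setminus Y_{1}$ is a $G_{\delta}$-point of $Z$, hence of $Y$; if $Z\setminus Y_{1}\neq\emptyset$ we again invoke Lemma~\ref{17}. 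Finally, if $Z=Y_{1}$, then $Y_{1}$ is itself a $G_{\delta}$-subset of $bG$, so $G\cup Y_{2}=bG\setminus Y_{1}$ is $\sigma$-compact; since $G$ is open in $G\cup Y_{2}$ (as $Y_{1}$ is dense in $Y$, so $Y_{2}$ has empty interior in $Y$, hence... ) --- here one recovers that $G$ is $\sigma$-compact, hence Lindel\"of, hence a topological group of countable type, hence a paracompact $p$-space, which is conclusion (1).

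The step I expect to be the main obstacle is the bookkeeping in the last case $Z=Y_{1}$: I need to argue cleanly that $Y_{1}$ being $G_{\delta}$ in $bG$ forces $G$ to be $\sigma$-compact (or otherwise of countable type). The cleanest route is probably to note that $G\cup Y_{2}=bG\setminus Y_{1}$ is a countable union of compact sets $K_{n}$; then $G=\bigcup_{n}(K_{n}\cap G)$ together with the density of $G$ lets one find a compact subset of $G$ with a countable neighborhood base in $bG$ (a nonempty interior of some $\overline{K_{n}}$ inside $bG$ meets $G$), which makes $G$ a topological group of countable type by \cite{RW}, hence a paracompact $p$-space by \cite{RW}. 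Throughout I would be careful that ``separated from $Y_{1}$ by a $G_{\delta}$-subset of $Z$'' gives a genuine $G_{\delta}$ singleton in $Z$, and that intersecting two $G_{\delta}$'s (the separating set and $Z$ itself) stays $G_{\delta}$ in $bG$; these are routine but must be stated. Once the ``both dense'' case is split into these subcases, each feeding into Lemma~\ref{17} or directly yielding a paracompact $p$-space, and the ``not both dense'' case is handed to Lemma~\ref{18}, the proof is complete.
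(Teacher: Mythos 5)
Your overall architecture --- hand the ``not both dense'' case to Lemma~\ref{18}, and in the ``both dense'' case use Ohio completeness of $Y_{1}$ to produce a $G_{\delta}$-set $Z\supset Y_{1}$ in $bG$ and then case-split --- is the same as the paper's, but two of your subcases contain genuine errors. The first is the repeated inference from ``$a$ is separated from $Y_{1}$ by a $G_{\delta}$-subset of $Z$'' to ``$\{a\}$ is a $G_{\delta}$-point.'' That is false: the separating set is just \emph{some} $G_{\delta}$-set containing $a$ and disjoint from $Y_{1}$, and it can be enormous (in your setting $Z\setminus Y_{1}$ contains $G\cap Z$, which is typically dense in $Z$). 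In your subcase $G\cap Z\neq\emptyset$ this is fatal: you cannot conclude that $a\in G$ is a $G_{\delta}$-point of $bG$, so neither first countability of $G$ nor any Lemma~\ref{17}-style argument gets started. What the separating set actually yields is a compact $G_{\delta}$-set $C$ of $bG$ with $a\in C$ and $C\cap Y_{1}=\emptyset$, i.e. $C\subset G\cup Y_{2}$; the productive dichotomy is therefore whether $X_{1}\setminus Y_{1}$ (equivalently such a $C$) meets $Y_{2}$, not whether $Z$ meets $G$. If it does, a point $y\in C\cap Y_{2}$ is a genuine $G_{\delta}$-point of $Y$ --- but to see this you must also invoke the countable pseudocharacter of $Y_{2}$, since $C\cap Y=C\cap Y_{2}$ and $C$ is $G_{\delta}$ in $bG$ --- and Lemma~\ref{17} applies; if it does not, then $C\subset G$ is a nonempty compact set of countable character in $bG$, so $G$ is of countable type and hence a paracompact $p$-space. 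This is exactly the paper's Case 2(b).

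Your final subcase $Z=Y_{1}$ also does not close. You want $G$ to be $\sigma$-compact because $bG\setminus Y_{1}=G\cup Y_{2}$ is $\sigma$-compact, but $G$ need not be closed in $G\cup Y_{2}$, and your proposed Baire-category repair cannot work: in this subcase $Y_{1}$ is a \emph{dense} $G_{\delta}$ in $bG$, so $bG\setminus Y_{1}$ is meager and every compact piece $K_{n}$ of it has empty interior in $bG$. The paper handles the corresponding situation (its Case 2(a), where $X_{1}=Y_{1}$ and $X_{2}=Y_{2}$) by observing that $Y$ then has countable pseudocharacter and quoting \cite[Theorem 5.1]{A2}, a tool you never invoke. (A repair closer to your setup: if $Z=Y_{1}$, then each $y\in Y_{1}$ lies in a compact $G_{\delta}$-set $C\subset Y_{1}$ of $bG$, and combining this with the countable pseudocharacter of $Y_{1}$ makes $y$ a $G_{\delta}$-point of $Y$, after which Lemma~\ref{17} finishes; but some such argument is required, and ``$G$ is $\sigma$-compact'' is not available.) As written, the proposal has real gaps in both of these subcases.
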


\begin{proof}
Case 1: $\overline{Y_{1}}\neq bG$ or $\overline{Y_{2}}\neq bG$.

It is easy to see by Lemma ~\ref{18}.

Case 2: $\overline{Y_{1}}=bG$ and $\overline{Y_{2}}=bG$.

Then $bG$ is the Hausdorff compactification of $Y_{1}$ and $Y_{2}$.
Since $Y_{1}$ and $Y_{2}$ are Ohio complete, there exist
$G_{\delta}$-subsets $X_{1}$ and $X_{2}$ satisfy the definition of
Ohio complete, respectively.

Case 2(a): $Y_{1}=X_{1}$ and $Y_{2}=X_{2}$.

Then $Y$ has countable pseudocharacter. By \cite[Theorem 5.1]{A2},
we complete the proof.

Case 2(b): $Y_{1}\neq X_{1}$ or $Y_{2}\neq X_{2}$.

Without loss of generality, we can assume that $Y_{1}\neq X_{1}$. If
$(X_{1}\setminus Y_{1})\cap Y_{2}\neq\emptyset$, then for each $y\in
(X_{1}\setminus Y_{1})\cap Y_{2}$ there exists a compact subset $C$
such that $y\in C$ and $C\cap Y_{1}=\emptyset$. Obviously, $y$ is a
$G_{\delta}$-point of $Y$. By Lemma ~\ref{17}, we also complete the
proof. If $(X_{1}\setminus Y_{1})\cap Y_{2}=\emptyset$, then there
exists a compact subset $C\subset G$ with a countable base at $C$ in $bG$. It follows that $G$ is a topological group of
countable type \cite{RW}. Therefore, $G$ is a paracompact $p$-space
\cite{RW}.
\end{proof}

A space with a $G_{\delta}$-diagonal is Ohio complete\cite{A4}.
Therefore, by Theorem ~\ref{t5}, we have the following result.

\begin{theorem}
Let $G$ be a non-locally compact topological group, and
$Y=bG\setminus G=Y_{1}\cup Y_{2}$, where both $Y_{1}$ and $Y_{2}$
have a $G_{\delta}$-diagonal. Then at least one of the following
conditions holds:
\begin{enumerate}
\item $G$ is a paracompact $p$-space;

\item $Y$ is first-countable at some point.
\end{enumerate}
\end{theorem}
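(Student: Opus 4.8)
The plan is to obtain this statement as an immediate consequence of Theorem~\ref{t5}, so essentially the only thing to do is to check that the two hypotheses of that theorem — countable pseudocharacter and Ohio completeness — hold for both $Y_{1}$ and $Y_{2}$.

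First I would verify the pseudocharacter condition. Recall that if a space $Z$ admits a sequence $\{\mathcal{G}_{n}\}_{n}$ of open families witnessing a $G_{\delta}$-diagonal, then for each $z\in Z$ one has $\{z\}=\bigcap_{n}\mathrm{st}(z,\mathcal{G}_{n})$, and each $\mathrm{st}(z,\mathcal{G}_{n})$ is open; hence $\{z\}$ is a $G_{\delta}$-subset of $Z$ and $Z$ has countable pseudocharacter. Applying this to $Z=Y_{1}$ and $Z=Y_{2}$ supplies the first hypothesis of Theorem~\ref{t5}.

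Next I would invoke the fact, recorded just before the statement and attributed to \cite{A4}, that every space with a $G_{\delta}$-diagonal is Ohio complete. Thus $Y_{1}$ and $Y_{2}$ are both Ohio complete, which is the remaining hypothesis of Theorem~\ref{t5}.

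With both hypotheses in place, Theorem~\ref{t5} applies directly to $Y=bG\setminus G=Y_{1}\cup Y_{2}$ and yields that $G$ is a paracompact $p$-space or that $Y$ is first-countable at some point, which is exactly the desired conclusion. There is no genuine obstacle here: all the substantive work — the case analysis on whether $Y_{1},Y_{2}$ are dense in $bG$, the reductions through Lemmas~\ref{17} and \ref{18}, and the appeal to \cite[Theorem 5.1]{A2} — has already been carried out inside the proof of Theorem~\ref{t5}, and the present statement is merely its specialization to the most familiar class of spaces lying in the intersection of "countable pseudocharacter" and "Ohio complete". The only line that deserves to be written out explicitly is the passage from a $G_{\delta}$-diagonal to countable pseudocharacter via the star computation above.
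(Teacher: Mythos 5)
Your proposal is correct and follows exactly the paper's route: the paper derives this theorem as an immediate corollary of Theorem~\ref{t5}, citing that a space with a $G_{\delta}$-diagonal is Ohio complete and (implicitly) that such a space has countable pseudocharacter via the star computation you describe. Nothing is missing.
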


\begin{question}
Let $G$ be a non-locally compact topological group, and
$Y=bG\setminus G=\bigcup_{i=1}^{i=n}Y_{i}$, where $Y_{i}$ has a
$G_{\delta}$-diagonal for every $1\leq i\leq n$. Is $G$  a
paracompact $p$-space or is $Y$ first-countable at some point?
\end{question}

\begin{question}
Let $G$ be a non-locally compact topological group, and
$Y=bG\setminus G=Y_{1}\cup Y_{2}$, where both $Y_{1}$ and $Y_{2}$
have quasi-$G_{\delta}$-diagonal. Is $G$  a paracompact $p$-space or
is $Y$ first-countable at some point?
\end{question}

\medskip
\section{Remainders of locally BCO and locally hereditarily D-spaces}

First, we study the following question, which was posed by C. Liu in
\cite{LC}.

\begin{question}
Let $G$ be a non-locally compact topological group, and
$Y=bG\setminus G$ have a BCO\footnote{A space $X$ is said to have a
{\it base of countable order}(BCO) \cite{Gr} if there is a sequence
$\{\mathcal {B}_{n}\}$ of base for $X$ such that whenever $x\in
b_{n}\in\mathcal {B}_{n}$ and $(b_{n})$ is decreasing (by set
inclusion), then $\{b_{n}: n\in \mathbb{N}\}$ is a base at $x$.}.
Are $G$ and $bG$ separable and metrizable?
\end{question}

\medskip
Now we give a partial answer for Question 4.1.

\begin{theorem}\label{t4}
Let $G$ be a non-locally compact topological group, and
$Y=bG\setminus G$ has a BCO. If $Y$ is Ohio complete, then
$G$ and $bG$ are separable and metrizable.
\end{theorem}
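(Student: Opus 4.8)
The plan is to reduce, via the structural dichotomy available for topological groups, to two manageable cases: either $G$ is a paracompact $p$-space, or $Y$ is first countable at some point; then to exploit the strong metrization consequences that a BCO carries in each case. First I would note that a space with a BCO has a $G_\delta$-diagonal (indeed, a BCO is a $\theta$-base refining itself), so $Y$ has a $G_\delta$-diagonal and in particular countable pseudocharacter. Combining this with the hypothesis that $Y$ is Ohio complete and invoking \cite[Theorem 5.1]{A2} (exactly as in the proof of Theorem~\ref{t2}), I would obtain that either $G$ is a paracompact $p$-space or $Y$ is first countable at some point. Actually, since a BCO is a hereditary property of open subsets, once $Y$ is first countable at one point, every point of some nonempty open subset of $Y$ is a $G_\delta$-point, and one can apply Lemma~\ref{17}-type reasoning to again land in the paracompact $p$-space alternative or deduce first countability outright; the cleanest route is to mirror the case analysis of Theorem~\ref{t2}.

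In the case where $Y$ is first countable, I would use that a first countable space with a BCO is (locally) developable, hence metrizable if also paracompact or Lindel\"of; more directly, by \cite[Corollary 8.3]{Gr} (or the metrization theorem for BCO spaces) a Lindel\"of space with a BCO is metrizable. So the strategy is: show every countably compact subset of $Y$ is compact and metrizable (a space with a $G_\delta$-diagonal has this property, and with a BCO these compacta are metrizable and $G_\delta$); deduce via Lemma~\ref{l6} that $G$ is locally separable and metrizable, hence a $p$-space; then $Y$ is Lindel\"of by Henriksen--Isbell, so $Y$ has a BCO and is Lindel\"of, hence metrizable; finally run the $\oplus_{\alpha}G_\alpha$ decomposition argument of Theorem~\ref{t0} verbatim to conclude $G$ and $bG$ are separable and metrizable.

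In the case where $G$ is a paracompact $p$-space, I would follow Case 2 of Theorem~\ref{t2}: $G$ is Ohio complete by \cite[Corollary 3.7]{A}, giving a $G_\delta$-subset $X$ of $bG$ with $G\subset X$ such that each point of $M=X\setminus G$ is separated from $G$ by a $G_\delta$-set, and by Lemma~\ref{15} $bG$ is first countable at each point of $M$. Splitting on whether $\overline{M}\cap G=\emptyset$: if so, $G$ is $G_\delta$ in $bG$, so $Y$ is $\sigma$-compact, its compact pieces are metrizable (BCO plus $G_\delta$-diagonal), hence $Y$ is separable, so $G$ has countable Souslin number and, being paracompact, is a Lindel\"of $p$-space, whence $Y$ is a Lindel\"of $p$-space with a BCO, so metrizable, so separable, and then \cite[Theorem 5]{A1} finishes; if $\overline{M}\cap G\neq\emptyset$, Lemma~\ref{15} makes $G$ metrizable and one argues as in Subcases 2(a)/2(b) of Theorem~\ref{t2} (locally separable versus nowhere locally separable, using the discrete-base decomposition and \cite[Proposition 4]{A1}). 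The main obstacle will be verifying cleanly that a BCO passes to the relevant subspaces and guarantees metrizability of the Lindel\"of remainders — that is, correctly quoting or deriving the facts that (a) a space with a BCO has a $G_\delta$-diagonal, (b) compact subsets of a BCO space are metrizable, and (c) a Lindel\"of space with a BCO is metrizable (equivalently, has a $\sigma$-locally finite base); everything else is a faithful transcription of the arguments in Theorems~\ref{t0} and~\ref{t2}.
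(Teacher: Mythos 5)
Your opening claim --- that a space with a BCO has a $G_{\delta}$-diagonal --- is false, and it is the load-bearing step of your entire reduction. If a BCO implied a $G_{\delta}$-diagonal, then Liu's Question 4.1 would already be settled by his theorem on remainders with locally a $G_{\delta}$-diagonal, and the Ohio completeness hypothesis in Theorem~\ref{t4} would be superfluous; the whole point of this section is that no such implication exists. (What a BCO does give is first countability, hence countable pseudocharacter, but that is strictly weaker than what you use: the phrase ``compact pieces are metrizable (BCO plus $G_{\delta}$-diagonal)'' and your framing of the appeal to \cite[Theorem 5.1]{A2} both rest on the false premise.) As a result, your dichotomy ``$G$ is a paracompact $p$-space or $Y$ is first countable at some point'' is not the one the paper uses, and the hypothesis that $Y$ is Ohio complete never actually does any work in your argument: \cite[Theorem 5.1]{A2} requires countable pseudocharacter of $Y$, not Ohio completeness, and the Ohio completeness you invoke in your paracompact $p$-space case is that of $G$ (a consequence of $G$ being a $p$-space), exactly as in Theorem~\ref{t2}.

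The paper instead applies \cite[Theorem 4.3]{A} to the Ohio complete remainder $Y$ to obtain the dichotomy ``$G$ is a paracompact $p$-space or $G$ is $\sigma$-compact.'' The paracompact $p$-space case is then immediate and far shorter than your version: $Y$ is Lindel\"{o}f by Henriksen--Isbell, a Lindel\"{o}f regular space with a BCO is developable and metrizable by \cite[Theorems 1.2 and 6.6]{Gr}, and Theorem~\ref{t3} (or \cite[Theorem 5]{A1}) finishes; none of the machinery of Case 2 of Theorem~\ref{t2} is needed there. The case you omit entirely, $G$ $\sigma$-compact, is where the substance of the proof lies: assuming $G$ is not metrizable, $Y$ is $\omega$-bounded by \cite[Theorem 3.12]{A2}; Tkachenko's theorem \cite{TG} gives $c(G)\leq\omega$, hence $c(Y)\leq\omega$; since $Y$ is \v{C}ech-complete one extracts a dense paracompact \v{C}ech-complete subspace $Z$ by \cite{SB}, which is metrizable (paracompact with a BCO) and separable, so $\omega$-boundedness forces $Y$ to be compact, contradicting the non-local compactness of $G$. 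Without this case, and with claim (a) underpinning the rest, the proposal does not amount to a proof.
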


\begin{proof}
Since $Y$ is Ohio
complete, $G$ is a paracompact $p$-space or $\sigma$-compact space
by \cite[Theorem 4.3]{A}.

Case 1: The space $G$ is a paracompact $p$-space.

Since $G$ is a $p$-space, the space $Y$ is Lindel\"{o}f by Henriksen and
Isbell's theorem. Hence $Y$ is developable by \cite[Theorem
6.6]{Gr}. Then $G$ and $bG$ are separable and metrizable by Theorem
~\ref{t3}.

Case 2: The space $G$ is a $\sigma$-compact space.

We claim that $G$ is metrizable. Suppose that $G$ is not metrizable.
Then $Y$ is $\omega$-bounded\footnote{A space $X$ is said to be {\it
$\omega$-bounded} if the clourse of every countable subset of $X$ is
compact.} by \cite[Theorem 3.12]{A2}. Since $G$ is a
$\sigma$-compact topological group, the Souslin number $c(G)$ of $G$
is countable by a theorem of Tkachenko \cite[Corollary 2]{TG}.
Therefore, $c(bG)\leq\omega$. $Y$ is dense in $bG$, since $G$ is
non-locally compact. It follows that $c(Y)\leq\omega$ as well. Since
$Y$ is $\check{C}$ech-complete, there exists a dense subspace
$Z\subset Y$ such that $Z$ is a paracompact and
$\check{C}$ech-complete subspace of $Y$ by \cite{SB}. Then $Z$ is a
paracompact space with a BCO. Therefore, $Z$ is metrizable by
\cite[Theorem 1.2 and 6.6]{Gr}. Since $c(Y)\leq\omega$ and $Z$ is
dense for $Y$, $c(Z)\leq\omega$ as well. It follows that $Z$ is
separable. Since $Y$ is $\omega$-bounded, it is compact. Therefore,
$G$ is locally compact, which is a contradiction. It follows that $G$ is
metrizable. Therefore, $G$ and $bG$ are separable and metrizable by
Case 1.
\end{proof}

\begin{theorem}
Let $G$ be a non-locally compact topological group, and
$Y=bG\setminus G$ have a BCO. If  $G$ is an $\Sigma$-space,
then $G$ and $bG$ are separable and metrizable.
\end{theorem}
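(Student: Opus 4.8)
The plan is to reduce to the already-established Theorem~\ref{t4}. Once one knows that $G$ is a paracompact $p$-space or a $\sigma$-compact space, the two cases of the proof of Theorem~\ref{t4} go through essentially word for word and give that $G$ and $bG$ are separable and metrizable; so the only genuinely new point is the dichotomy. The starting observation is that a space with a BCO is first countable: if $\{\mathcal{B}_{n}\}$ witnesses the BCO and $x$ is any point, one recursively picks $b_{n}\in\mathcal{B}_{n}$ with $x\in b_{n+1}\subset b_{n}$ (possible because each $\mathcal{B}_{n}$ is a base and $b_{n}$ is a neighborhood of $x$), and then $\{b_{n}:n\in\mathbb{N}\}$ is a neighborhood base at $x$ by the definition of a BCO. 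Hence $Y=bG\setminus G$ is first countable, in particular it has countable pseudocharacter.

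The crux is to show, using that $G$ is a $\Sigma$-space, that $G$ is a paracompact $p$-space or $\sigma$-compact (this is exactly the dichotomy used in Theorem~\ref{t4}, where it came from Ohio completeness of $Y$ via \cite[Theorem~4.3]{A}). Here I would use two facts: a $\Sigma$-space is subparacompact, and a subparacompact $p$-space is a paracompact $p$-space; so it is enough to rule out that $G$ is neither a $p$-space nor $\sigma$-compact. Together with the countable pseudocharacter of $Y$, I would try to carry this out by an Arhangel'ski\v{\i}-type analysis of topological groups whose remainders have countable pseudocharacter, in the spirit of \cite[Theorem~5.1]{A2} and \cite[Theorem~4.3]{A}; the cleanest variant, if it can be arranged, is to show directly that $Y$ is Ohio complete under the present hypotheses and then quote \cite[Theorem~4.3]{A} exactly as in Theorem~\ref{t4}. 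I expect this step --- isolating the right dichotomy and checking that ``$G$ a $\Sigma$-space plus $Y$ of countable pseudocharacter'' really forces it --- to be the main obstacle.

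Granting the dichotomy, the conclusion follows as in Theorem~\ref{t4}. If $G$ is a paracompact $p$-space (in particular if $G$ is metrizable), then $G$ is a $p$-space, so $Y$ is Lindel\"{o}f by Henriksen and Isbell's theorem; a Lindel\"{o}f space with a BCO is developable by \cite[Theorem~6.6]{Gr}, hence has locally a quasi-$G_{\delta}$-diagonal, and therefore $G$ and $bG$ are separable and metrizable by Theorem~\ref{t3}. If $G$ is $\sigma$-compact, then $Y$ is a $G_{\delta}$-subset of $bG$ and hence $\check{C}$ech-complete, and the argument of Case~2 of the proof of Theorem~\ref{t4} applies verbatim: either $G$ is metrizable, in which case we are in the previous situation, or $Y$ is $\omega$-bounded, which forces $Y$ to be compact and hence $G$ to be locally compact, contradicting the hypothesis.
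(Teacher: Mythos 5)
There is a genuine gap at exactly the point you flag yourself: the dichotomy ``$G$ is a paracompact $p$-space or $G$ is $\sigma$-compact'' is never established, only described as something you ``would try to carry out.'' Your two fallback routes do not close it. First, the pseudocharacter route: $Y$ having countable pseudocharacter gives, via \cite[Theorem 5.1]{A2}, only the alternative ``$G$ is a paracompact $p$-space or $Y$ is first countable'' --- and since a space with a BCO is already first countable (as you correctly observe), this alternative is vacuous here and yields nothing about $G$. Second, the route ``$\Sigma$-space $\Rightarrow$ subparacompact, subparacompact $p$-space $\Rightarrow$ paracompact $p$-space'' still requires you to show that $G$ is a $p$-space, which is the whole difficulty; and showing that $Y$ is Ohio complete under the present hypotheses is not something you actually do. So the central step of the proof is missing.

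The paper closes this gap by a different mechanism that your proposal does not touch: by \cite[Theorem 2.8]{AB}, a BCO forces every compact subspace of $Y$ to have countable character in $Y$, i.e.\ $Y$ is of countable type. Since $G$ is nowhere locally compact, $Y$ is dense in $bG$, so $bG$ is a compactification of $Y$ with remainder $G$, and the Henriksen--Isbell theorem (applied to $Y$, not to $G$) makes $G$ Lindel\"{o}f. Now $G$ is a Lindel\"{o}f $\Sigma$-space; if it is $\sigma$-compact one invokes Case~2 of Theorem~\ref{t4}, and otherwise the proof of \cite[Theorem 4.2]{A2} shows $G$ is a Lindel\"{o}f $p$-space, so Case~1 of Theorem~\ref{t4} applies. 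Your treatment of the two cases once the dichotomy is granted is sound and matches the paper, but without the countable-type/Henriksen--Isbell step the argument does not go through.
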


\begin{proof}
From \cite[Theorem 2.8]{AB} it follows that every compact subspace of $Y$
has countable character in $Y$. Since $G$ is non-locally compact,
$Y$ is also a dense subset of $bG$. Hence $G$ is Lindel\"{o}f space
by Henriksen and Isbell's theorem. If $G$ is a $\sigma$-compact
space, then $G$ and $bG$ are separable and metrizable by Case 2 in
Theorem ~\ref{t4}. Hence we assume that $G$ is non-$\sigma$-compact.
Since $G$ is a Lindel\"{o}f $\Sigma$-space, it is easy to see that
$G$ is a Lindel\"{o}f $p$-space by the proof of \cite[Theorem
4.2]{A2}. It follows that $G$ and $bG$ are separable and metrizable
by Case 1 in Theorem ~\ref{t4}.
\end{proof}

Finally, we study the remainders of topological groups with locally a hereditarily D-space.

\begin{theorem}\label{t6}
Let $G$ be a topological group. If for each $y\in Y=bG\setminus G$
there exists an open neighborhood $U(y)$ of $y$ such that every
$\omega$-bounded subset of $U(y)$ is compact, then at least one of
the following conditions holds:
\begin{enumerate}
\item $G$ is metrizable;

\item $bG$ can be continuously mapped onto the Tychonoff cube $I^{\omega_{1}}$.
\end{enumerate}
\end{theorem}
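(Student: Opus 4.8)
The plan is to argue along the dichotomy ``$Y=bG\setminus G$ is Lindel\"of or pseudocompact'' (a theorem of Arhangel'ski\v{\i} valid for remainders of an arbitrary topological group) and to treat the two cases separately. We may assume $G$ is not metrizable, since otherwise (1) holds, and (as in the other theorems of this paper) that $G$ is not locally compact, so that $Y$ is dense in $bG$; it then suffices to produce a continuous surjection $bG\to I^{\omega_{1}}$. By the Birkhoff--Kakutani metrization theorem $G$ is not first countable, so $\chi(e,G)>\omega$. The hypothesis supplies, for each $y\in Y$, an open neighbourhood $U(y)$ in $Y$ in which every $\omega$-bounded subspace is compact.

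Suppose first that $Y$ is Lindel\"of. Then by Henriksen and Isbell's theorem $G$ is of countable type, hence a paracompact $p$-space, and so by the structure theory of such groups there is a compact subgroup $N\leq G$ with $G/N$ metrizable; since $G$ is non-locally compact, $G/N$ is non-compact and therefore has a nonempty Lindel\"of remainder. I would then show that $N$ must be metrizable — whence $G$ is metrizable and (1) holds. The idea: the quotient map $q\colon G\to G/N$ is perfect with fibre $N$, its Stone--\v{C}ech extension $bG\to b(G/N)$ is again perfect with fibres over the remainder of $G/N$ homeomorphic to $N$, so $Y$ contains, near each of its points lying in such a fibre, a homeomorphic copy of a nonempty open subset of $N$; were $N$ non-metrizable it would have uncountable weight, hence (by homogeneity of $N$ and passage to a suitable compact co-metrizable subgroup) every nonempty open subset of $N$ would contain a non-compact $\omega$-bounded subspace, contradicting the hypothesis at that point of $Y$. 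The careful verification that the remainder fibres of $bG\to b(G/N)$ are copies of $N$, and the extraction of a non-compact $\omega$-bounded subspace inside an arbitrarily small open subset of a non-metrizable compact group, are the only delicate points in this case.

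Now suppose $Y$ is pseudocompact. If $Y$ were compact, $G$ would be open in $bG$, hence locally compact, contrary to assumption; so $Y$ is a \emph{non-compact} pseudocompact space, dense in $bG$. The goal is to exhibit $\omega_{1}$ pairwise independent pairs of disjoint closed subsets of $bG$ (independence meaning $\bigcap_{\alpha\in F}C_{\alpha}\neq\emptyset$ for every finite $F$ and every choice $C_{\alpha}$ from the $\alpha$-th pair): by Urysohn's lemma these yield continuous maps $bG\to I$ whose diagonal carries a closed subspace of $bG$ onto $I^{\omega_{1}}$, and since $I^{\omega_{1}}$ is an absolute extensor for normal spaces this promotes to a continuous surjection $bG\to I^{\omega_{1}}$, establishing (2). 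To produce the independent family one exploits, in the spirit of Arhangel'ski\v{\i}'s analysis of pseudocompact remainders of topological groups (and, if necessary, after a further split according to whether $G$ is $\sigma$-compact), that the two dense subsets $G$ (a group) and $Y$ (pseudocompact) of $bG$ together force $bG$ to be ``wide'', while the local hypothesis on $Y$ blocks the only alternative escape routes: the non-compactness of the pseudocompact $Y$ cannot be witnessed by a Lindel\"of-type piece (that is Case 1), nor by a $\beta\omega$-like piece, the latter being excluded because a non-compact $\omega$-bounded subspace is forbidden to lie inside any $U(y)$.

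I expect Case 2 to be the genuine obstacle: converting the soft data ``$Y$ is a non-compact pseudocompact remainder of a topological group that is locally free of non-compact $\omega$-bounded subspaces'' into an honest independent $\omega_{1}$-family of closed pairs. By contrast the supporting facts used above — the splitting of a paracompact $p$-space group off a compact subgroup with metrizable quotient, the perfectness with unchanged fibres of the Stone--\v{C}ech extension of a perfect map, the absolute-extensor property of $I^{\omega_{1}}$, and Birkhoff--Kakutani — are standard and should cause no difficulty.
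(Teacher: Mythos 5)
There is a genuine gap, and you have named it yourself: in your second case (pseudocompact $Y$) you only describe the \emph{shape} of the desired conclusion (an independent $\omega_{1}$-family of closed pairs yielding a map onto $I^{\omega_{1}}$) and state that you ``expect'' this to be the real obstacle. No argument is given for how the local hypothesis on $\omega$-bounded subsets produces such a family, so the proposal does not prove the theorem. The missing idea is that one should not split along the Lindel\"of/pseudocompact dichotomy at all, but instead apply \v{S}apirovski\v{\i}'s $\pi$-character theorem directly to the compactum $bG$: if $bG$ does \emph{not} map onto $I^{\omega_{1}}$, then the set $A$ of points of $bG$ of countable $\pi$-character is dense in $bG$. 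If $A$ meets $G$, then $G$ has countable $\pi$-character at a point, hence (being a topological group) is first countable and metrizable, giving (1). If $A\subset Y$, then for each $y$ the set $A\cap U(y)$ is dense in $U(y)$ and is $\omega$-bounded there (the closure of a countable subset of $A$ cannot reach into $G$, since the union of countable local $\pi$-bases at its points would give $G$ a point of countable $\pi$-character), so by hypothesis $A\cap U(y)$ is compact and dense, i.e.\ $U(y)$ is compact; thus $Y$ is locally compact and dense in $bG$, hence open, contradicting the density of $G$. This single argument replaces both of your cases.

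Two further problems with what you do write. First, you may not ``assume $G$ is not locally compact'': unlike the other theorems in this paper, Theorem~\ref{t6} has no such hypothesis, and the locally compact case must be (and in the paper is) treated separately, via the fact that a non-metrizable locally compact group contains a copy of $D^{\omega_{1}}$. Second, in your Lindel\"of case the claim that the remainder fibres of the extension of $q\colon G\to G/N$ are homeomorphic copies of $N$ is unjustified for an arbitrary compactification $bG$ (the perfect map need not even extend to $bG$, and $G$ is not a product over $G/N$); moreover the whole detour is unnecessary, because if the compact subgroup $N$ is non-metrizable it already maps onto $I^{\omega_{1}}$, and since $N$ is closed in $bG$ and $I^{\omega_{1}}$ is an absolute extensor for normal spaces, this map extends to a surjection $bG\to I^{\omega_{1}}$, so conclusion (2) holds outright without any contradiction being needed.
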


\begin{proof}
Case 1: The space $G$ is locally compact.

If $G$ is not metrizable, then $G$ contains a topological copy of
$D^{\omega_{1}}$. Since the space $G$ is normal, the space $G$ can be
continuously mapped onto the Tychonoff cube $I^{\omega_{1}}$

Case 2: The space G is not locally compact.

Obviously, both $G$ and $Y$ are dense in $bG$. Suppose that the
condition (2) doesn't hold. Then, by a theorem of
$\check{S}$apirovski\v{\i} in \cite{SB1}, the set $A$ of all points
$x\in bG$ such that the $\pi$-character of $bG$ at $x$ is countable
is dense in $bG$. Since $G$ is dense in $bG$, it can follow that the
$\pi$-character of $G$ is countable at each point of $A\cap G$.

Subcase 2(a): $A\cap G\neq\emptyset$.

Since $G$ is a topological group, it follows that $G$ is first
countable, which implies that $G$ is metrizable.

Subcase 2(b): $A\cap G=\emptyset$.

Obviously, $A\subset Y$. For each $y\in Y$, there exists an open
neighborhood $U(y)$ in $Y$ such that $y\in U(y)$ and every
$\omega$-bounded subset of $U(y)$ is compact. Obviously,  $A\cap
U(y)$ is dense of $U(y)$. Also, it is easy to see that $A\cap U(y)$
is $\omega$-bounded subset for $U(y)$. Therefore, $A\cap U(y)$ is
compact. Then $A\cap U(y)=U(y)$, since $A\cap U(y)$ is dense of
$U(y)$. Hence $Y$ is locally compact, a contradiction.
\end{proof}

A {\it neighborhood assignment} for a space $X$ is a function
$\varphi$ from $X$ to the topology of $X$ such that $x\in \varphi
(x)$ for each point $x\in X$. A space $X$ is a {\it
D-space} \cite{DV}, if for any neighborhood assignment $\varphi$ for
$X$ there is a closed discrete subset $D$ of $X$ such that
$X=\bigcup_{d\in D}\varphi (d)$.

It is easy to see that every countably compact D-space is compact.
Hence we have the following result by Theroem~\ref{t6}.

\begin{theorem}\label{t7}
Let $G$ be a topological group. If $Y=bG\setminus G$ is locally a
hereditarily D-space, then at least one of the following conditions
holds:
\begin{enumerate}
\item $G$ is metrizable;

\item $bG$ can be continuously mapped onto the Tychonoff cube $I^{\omega_{1}}$.
\end{enumerate}
\end{theorem}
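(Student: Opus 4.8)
The plan is to reduce Theorem~\ref{t7} to Theorem~\ref{t6} by checking that the hypothesis ``$Y$ is locally a hereditarily D-space'' implies the hypothesis of Theorem~\ref{t6}, namely that each point $y\in Y$ has an open neighborhood $U(y)$ in which every $\omega$-bounded subset is compact. So fix $y\in Y$ and, using the local hypothesis, choose an open $U(y)\ni y$ in $Y$ such that $U(y)$ is a hereditarily D-space. It then suffices to show that every $\omega$-bounded subspace $S$ of $U(y)$ is compact. Since $U(y)$ is hereditarily a D-space, $S$ (with the subspace topology) is itself a D-space. On the other hand $S$ is $\omega$-bounded, hence in particular countably compact. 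Invoking the remark recorded just before the statement --- every countably compact D-space is compact --- we conclude that $S$ is compact, which is exactly what is required.

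Having verified the hypothesis of Theorem~\ref{t6}, I would then simply apply that theorem to obtain the dichotomy: either $G$ is metrizable, or $bG$ can be continuously mapped onto the Tychonoff cube $I^{\omega_{1}}$. This is precisely the conclusion of Theorem~\ref{t7}, so the proof is complete.

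The only point that needs a word of justification is the easy remark that a countably compact D-space is compact, which the paper already asserts (``It is easy to see that every countably compact D-space is compact''); for completeness one can note that if $X$ is a countably compact D-space and $X$ were not compact, pick an open cover with no finite subcover and thin it to a point-finite or merely non-finitely-subcoverable neighborhood assignment $\varphi$; the D-space property yields a closed discrete $D\subseteq X$ with $X=\bigcup_{d\in D}\varphi(d)$, and $D$ must be infinite (else we get a finite subcover), contradicting countable compactness since an infinite closed discrete subset of a countably compact space cannot exist. I do not anticipate a genuine obstacle here: the entire content of Theorem~\ref{t7} is the observation that ``hereditarily D'' is exactly the hereditary strengthening needed to feed ``every $\omega$-bounded subspace is compact'' into Theorem~\ref{t6}, and the real work has already been done in the proof of Theorem~\ref{t6} (the $\check{S}$apirovski\v{\i} density argument and the topological-group rigidity that forces metrizability once the $\pi$-character is countable at a point of $G$).
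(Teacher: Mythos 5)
Your proposal is correct and follows exactly the paper's route: the paper proves Theorem~\ref{t7} precisely by observing that every countably compact D-space is compact and then invoking Theorem~\ref{t6}, since an $\omega$-bounded subset of a hereditarily D-space $U(y)$ is a countably compact D-space and hence compact. Your spelled-out verification of the hypothesis of Theorem~\ref{t6} (and the standard cover-to-neighborhood-assignment argument for why a countably compact D-space is compact) just makes explicit what the paper leaves as a one-line remark.
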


{\bf Acknowledgements}. We wish to thank
the reviewers for the detailed list of corrections, suggestions to the paper, and all her/his efforts
in order to improve the paper. In particular, Question~\ref{q0} is due to the reviewers.

\vskip0.9cm

\end{document}